\theoremstyle{plain}
\newtheorem{subprop}[subsubsection]{Proposition}
\newtheorem{sublemm}[subsubsection]{Lemma}
\newtheorem{fact}[subsubsection]{Fact}
\newtheorem{theo}[subsection]{Theorem}
\newtheorem{coro}[subsection]{Corollary}
\newtheorem{observ}[subsection]{Observation}
\newtheorem*{mainthm}{Theorem}
\def\t{\otimes}
\def\DD{\mathsf D}
\def\EE{\mathsf E}
\def\PP{\mathsf P}
\def\QQ{\mathsf Q}
\def\KK{\mathbb K}
\def\NN{\mathbb N}
\def\ZZ{\mathbb Z}
\def\we{\stackrel{\sim}{\rightarrow}}
\def\weinv{\stackrel{\sim}{\leftarrow}}
\DeclareMathOperator{\Der}{Der}
\DeclareMathOperator{\Hom}{Hom}
\DeclareMathOperator{\Ho}{Ho}
\title{Obstruction theory for algebras over an operad}
\author{Eric Hoffbeck}
\address{Fachbereich Mathematik, Universit\"at Hamburg, Bundesstra\ss e 55, 20146 Hamburg, Germany}
\email{Eric.Hoffbeck@math.uni-hamburg.de}
\subjclass[2000]{Primary: 55S35. Secondary: 18D50 (55P48)} 
\begin{document}

\begin{abstract}
The goal of this paper is to set up an obstruction theory in the context of algebras over an operad and in the framework of differential graded modules over a field. 
Precisely, the problem we consider is the following: 
Suppose given two algebras $A$ and $B$ over an operad $\PP$ and an algebra morphism from $H_*A$ to $H_*B$. Can we realize this morphism as a morphism of $\PP$-algebras from $A$ to $B$ in the homotopy category? Also, if the realization exists, is it unique in the homotopy category?

We identify obstruction cocycles for this problem, and notice that they live in the first two groups of operadic $\Gamma$-cohomology.
\end{abstract}

\maketitle

\vspace{0.3cm}

In this paper we study a question of realizability of morphisms in a category of algebras over an operad.

In general, a realization problem takes the following form. We fix a category $\mathcal C$ equipped with a model structure (for instance: topological spaces, spectra, differential graded algebras over an operad). We have a homology (or homotopy) functor $H \colon \mathcal C \to \mathcal A$ with values in a purely algebraic category (for instance: graded modules, graded algebras). 
The usual questions are the existence of a realization of an object $a$ in $\mathcal A$ by a $c$ in $\mathcal C$ such that $H (c) = a$ and the existence of a realization of a morphism $f \colon H (c_1) \to H (c_2)$ by a morphism $\phi \colon c_1 \to c_2$ such that $H (\phi) = f$.





Generally, the obstructions to these existences can be interpreted as classes in some (co)homology theory.

The most classical example goes back to Steenrod for $\mathcal C$ the category of topological spaces and $H=H_{\text{sing}}^*$. A solution of this problem in the case of rational nilpotent CW-complexes has been given by Halperin and Stasheff in \cite{3HS}. 
They apply rational homotopy theory to reduce this topological realization problem to a realization problem in the category of differential graded commutative algebras.
The obstructions then live in some Harrison cohomology groups. The obstruction theory of Blanc, Dwyer and Goerss \cite{3BDG} for the realizability of $\Pi$-algebras by a space, the theories of Robinson \cite{3Rob} and of Goerss and Hopkins \cite{3GH} for the realizability of an algebra by an $E_\infty$-spectra are other fundamental examples of obstruction theory in homotopy theory.

\vspace{0.4cm}

We are here interested in the case $\mathcal C = {}^{}_\PP\text{dgMod}_\KK$, the category of algebras over a fixed operad $\PP$ in the framework of differential graded modules (for short dg-modules) over a field $\KK$. 
The functor $H$ is the homology  of the underlying dg-module of an algebra over $\PP$. This homology inherits a $H_*\PP$-algebra structure. The target category $\mathcal A$ consists of the graded $H_*\PP$-algebras. The realization problem has been studied by Livernet  \cite[Section 3]{3Liv} in the setting of $\NN$-graded dg-modules and when the ground field $\KK$ has characteristic $0$.
The obstruction classes live in some cohomology groups of a natural cohomology theory associated to $\PP$, generalizing Harrison cohomology for $\PP=\mathsf{Com}$.

In this paper, we obtain an obstruction theory for the realization of morphisms in the setting of $\ZZ$-graded dg-modules and when the ground ring $\KK$ is any field. We can identify a sequence of obstructions lying in some cohomology groups. Precisely, the $\Gamma$-cohomology of algebras over an operad (defined in \cite{3Hof}, generalizing Robinson's and Whitehouse's $\Gamma$-homology \cite{3RW}) appears in our construction and we get the following theorems:

\vspace{0.2cm}
\begin{mainthm}[Corollary \ref{threal}]
Let $\PP$ be a connected graded operad and let $\tilde\PP$ be an operadic cofibrant replacement of $\PP$. Let $A$ and $B$ be two algebras over $\tilde\PP$. 
Suppose given a $\PP$-algebra morphism $f : H_*A \to H_*B$ (where $H_*A$ and $H_*B$ have the structure induced in homology).

The obstruction cocycles to the realizability of the morphism $f$ lie in $H\Gamma_\PP^1(H_*A,H_*B)$. If $H\Gamma_\PP^1(H_*A,H_*B)$=0, then there automatically exists a morphism $\phi$ in the homotopy category of $\tilde \PP$-algebras such that $H_*\phi=f$. 
\end{mainthm}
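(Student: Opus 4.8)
The plan is to convert the realization question into an inductive lifting problem over a cofibrant resolution of $A$, and then to recognize the successive obstructions as cocycles in the complex computing $\Gamma$-cohomology. First I would replace $A$ by a quasi-free $\tilde\PP$-algebra resolution $(\tilde\PP(V),\partial)\we A$; since $\tilde\PP$ is cofibrant and $\KK$ is a field, such a resolution exists, and because every object of ${}^{}_{\tilde\PP}\text{dgMod}_\KK$ is fibrant, the set of homotopy classes of maps $(\tilde\PP(V),\partial)\to B$ computes the morphisms $A\to B$ in $\Ho({}^{}_{\tilde\PP}\text{dgMod}_\KK)$. A morphism out of the quasi-free algebra is the same datum as a degree-$0$ linear map $V\to B$ whose free extension commutes with the differentials, so realizing $f$ amounts to producing such a map whose effect on homology is $f$.

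Next I would equip the generating module with the cellular filtration $V=\colim_n V_{(n)}$ coming from the resolution, arranged so that $\partial V_{(n+1)}\subseteq\tilde\PP(V_{(n)})$, and build the morphism stage by stage. Starting from generators sent to cycles that represent $f$ on the indecomposables of $H_*A$, I would try to extend a partial morphism $\phi_n$ defined on $\tilde\PP(V_{(n)})$ to the next stage. Extending over a new generator $v$ requires solving $d_B\,\phi(v)=\phi_n(\partial v)$ in $B$; the right-hand side is automatically a cycle since $\partial^2=0$, so the only thing that can fail is that its homology class $[\phi_n(\partial v)]\in H_*B$ need not vanish.

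The heart of the argument is then to show that, as $v$ ranges over the generators added at stage $n+1$, the assignment $v\mapsto[\phi_n(\partial v)]$ assembles into a cochain that is a cocycle in the complex computing $\Gamma$-cohomology, with coefficients in $H_*B$ regarded as an $H_*A$-module through $f$. Here the linear dual of the new generators should be identified with a cochain group of the $\Gamma$-complex and the filtration differential $\partial$ with the $\Gamma$-differential. I would verify the cocycle condition from the compatibility of $\phi_n$ with $\partial$ at the previous stage, and check that re-choosing the lower-stage values of $\phi_n$ alters the cochain only by a coboundary, so that its class in $H\Gamma_\PP^1(H_*A,H_*B)$ is a well-defined obstruction to passing from stage $n$ to stage $n+1$. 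When $H\Gamma_\PP^1(H_*A,H_*B)=0$ every such class vanishes, each extension can be carried out, and the colimit over $n$ yields the desired $\phi$ with $H_*\phi=f$.

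The main obstacle I anticipate is the bookkeeping that matches the $\tilde\PP$-resolution of $A$ against the algebraic $\Gamma$-complex of the graded $\PP$-algebra $H_*A$: one must pin down the filtration on $V$ precisely enough that $\partial$ induces the $\Gamma$-differential and that the obstruction genuinely sits in cohomological degree one, using the quasi-isomorphism $\tilde\PP\we\PP$ to pass between the two models. A secondary difficulty is the usual indeterminacy of obstruction theory, namely that the cohomology class rather than the cochain is the correct invariant; accounting for the freedom to re-choose earlier stages is exactly what makes the vanishing of the whole group $H\Gamma_\PP^1(H_*A,H_*B)$, and not merely of each individual cocycle, the right hypothesis for guaranteeing a realization.
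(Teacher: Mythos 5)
Your outline follows the same global strategy as the paper---filter a quasi-free resolution of $A$, extend the morphism stage by stage, and read off the obstruction in the derivation complex computing $\Gamma$-cohomology---but it stops short of the two ingredients that actually make that strategy close up, and the second of these is precisely the ``main obstacle'' you name without resolving. First, the paper begins by reducing to the case where the differentials of $A$ and $B$ are trivial, via homotopy transfer of the $\tilde\PP$-structure along $H_*A \weinv \cdot \we A$ (Fact \ref{transpstruct}) and the identification $\Hom_{\Ho}(A,B)=\Hom_{\Ho}(H,K)$. This step is not cosmetic: the theorem asserts the obstruction lies in $H\Gamma_\PP^1(H_*A,H_*B)$, i.e.\ in a complex built from the induced $\PP$-algebra structures on the homologies. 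In your setup the generators $V$ of an arbitrary cellular resolution of $A$ (with its nontrivial differential) are not built out of $H_*A$, your base-case prescription ``generators sent to cycles representing $f$ on the indecomposables'' is not well defined for a general filtration $V_{(n)}$, and the obstruction cochain $v\mapsto[\phi_n(\partial v)]$ would naturally live in a complex attached to the chain-level objects $A$ and $B$ rather than to $(H_*A,\alpha_1)$ and $(H_*B,\beta_1)$.

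Second, the identification of $\partial$ restricted to the new generators with the $\Gamma$-differential is not generic bookkeeping that can be arranged for ``a'' resolution; the paper obtains it by taking the specific quasi-cofree model $\tilde\PP(\DD(A),\partial_\alpha)$ with $\DD=B(\PP\boxtimes\EE)$, graded by bar weight plus degree in $\EE$. With that choice $\DD_{[0]}(A)=H_*A$, the weight-one part of the twisting differential involves only the induced actions $\alpha_1$ and $\beta_1$ (not the full $A_\infty$-type structures $\alpha$, $\beta$), and the terms of weight drop exactly one assemble into the differential of $\Der_{\tilde\PP}(\tilde\PP(\DD(A),\partial_{\alpha_1}),(B,\beta_1))$, which is by definition the complex computing $H\Gamma_\PP^*(H_*A,H_*B)$; the higher-weight terms are the ``known'' right-hand side of the induction. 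Your proposal asserts that such an identification exists but gives no mechanism for it, and without the trivial-differential reduction it is not clear it can be made at all. So the write-up as it stands defers the actual content of the proof; to complete it you would need to add the transfer-of-structure reduction and then either adopt the explicit bar-type resolution or supply an equally concrete filtration for which the weight-one differential is visibly the $\Gamma$-differential of $H_*A$ with coefficients in $H_*B$.
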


\begin{mainthm}[Corollary \ref{thunicite}]
Let $\PP$ be a connected graded operad and let $\tilde\PP$ be an operadic cofibrant replacement of $\PP$. Let $A$ and $B$ be two algebras over $\tilde\PP$. 
Suppose given a $\PP$-algebra morphism $f : H_*A \to H_*B$ and two homotopy morphisms $\phi_1, \phi_2$ such that $H_*\phi_1=H_*\phi_2=f$.

The obstruction cocycles to the uniqueness of the realizations in the homotopy category lie in the group $H\Gamma_\PP^0(H_*A,H_*B)$. If $H\Gamma_\PP^0(H_*A,H_*B)=0$, then $\phi_1=\phi_2$ in the homotopy category of $\tilde \PP$-algebras.
\end{mainthm}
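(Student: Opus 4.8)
The plan is to run the same tower argument that underlies the existence statement (Corollary \ref{threal}), but shifted down one cohomological degree. First I would replace $A$ by a quasi-free (hence cofibrant) model $R = (\tilde\PP(V), \partial)$ equipped with a quasi-isomorphism $R \we A$, where $V$ is a graded $\KK$-module carrying an exhaustive filtration $V = \colim_s V_s$ adapted to a basis of $H_*A$; such a model exists because $\KK$ is a field and $\PP$ is connected graded. Since every object of ${}^{}_{\tilde\PP}\text{dgMod}_\KK$ is fibrant, the homotopy morphisms $\phi_1,\phi_2$ are represented by genuine $\tilde\PP$-algebra maps $\psi_1,\psi_2 \colon R \to B$ with $H_*\psi_1 = H_*\psi_2 = f$, and $\phi_1 = \phi_2$ in the homotopy category if and only if $\psi_1$ and $\psi_2$ are left homotopic. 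Because $R$ is quasi-free, a homotopy between $\psi_1$ and $\psi_2$ is the same datum as a $(\psi_1,\psi_2)$-derivation $h \colon R \to B$ of degree $+1$, freely determined by its restriction to the generators $V$ and required to satisfy $\psi_1 - \psi_2 = d_B\, h + h\, \partial$ on $V$.

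I would then construct $h$ by induction on the filtration. Assuming $h$ defined on $\tilde\PP(V_{<s})$, a generator $v \in V_s$ has $\partial v \in \tilde\PP(V_{<s})$, so $h(\partial v)$ is already available, and extending $h$ over $v$ means choosing $h(v) \in B$ with $d_B\, h(v) = \psi_1(v) - \psi_2(v) - h(\partial v)$. A direct check, using that $\psi_1,\psi_2$ are chain maps and that $h$ already solves the equation in lower filtration, shows the right-hand side is a $d_B$-cycle, so it has a well-defined class in $H_*B$; assembling these classes over the generators $V_s$ gives the stage-$s$ obstruction cochain. The crucial input from the paper is the identification of the complex of $(\psi_1,\psi_2)$-derivations of the model $R$, with $H_*B$ regarded as an $H_*A$-module through $f$, with the complex computing operadic $\Gamma$-cohomology; under this identification the stage-$s$ obstruction is a cocycle whose class lies in $H\Gamma_\PP^0(H_*A,H_*B)$, and it vanishes precisely when $h$ can be extended over $V_s$ (possibly after adjusting the previously chosen values by a coboundary).

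Finally, if $H\Gamma_\PP^0(H_*A,H_*B) = 0$, then every stage obstruction vanishes automatically, so $h$ extends over all of $V$; passing to the colimit over $s$ yields a derivation homotopy $h \colon R \to B$ realizing $\psi_1 - \psi_2 = d_B h + h \partial$. Hence $\psi_1$ and $\psi_2$ are homotopic, and therefore $\phi_1 = \phi_2$ in $\Ho({}^{}_{\tilde\PP}\text{dgMod}_\KK)$. The hard part will be the second step: showing that the stage-wise obstructions are genuine cocycles and that their classes are exactly the elements of $H\Gamma_\PP^0(H_*A,H_*B)$ with the correct coefficient module. This rests on the comparison, set up earlier, between derivations of the quasi-free model and the $\Gamma$-cohomology cochain complex, and the cycle condition on the right-hand side uses essentially that $\psi_1$ and $\psi_2$ induce the same map $f$ on homology. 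Convergence of the filtration and independence of the obstruction class from the auxiliary choices are the remaining points I would verify.
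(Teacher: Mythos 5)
Your overall strategy matches the paper's: reduce to a filtered quasi-free model of $A$, build the homotopy inductively over the filtration, and read off the obstruction in the derivation complex, which computes $\Gamma$-cohomology with a degree shift landing you in $H\Gamma_\PP^0(H_*A,H_*B)$. The paper does exactly this with the explicit model $R=(\tilde\PP(\DD(A)),\partial)$, $\DD=B(\PP\boxtimes\EE)$, filtered by the bar weight plus the degree in $\EE$ (which is what guarantees your requirement $\partial v\in\tilde\PP(V_{<s})$), after first reducing to the case where $A$ and $B$ have trivial differentials.

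There is, however, a genuine gap at the step where you declare that a homotopy between $\psi_1$ and $\psi_2$ \emph{is} a $(\psi_1,\psi_2)$-derivation $h$ of degree $+1$ with $\psi_1-\psi_2=d_Bh+h\partial$. Over a general operad in arbitrary characteristic this is not the right notion: a right homotopy is a $\tilde\PP$-algebra map $R\to B\otimes N^*(\Delta^1)$, where the $\tilde\PP$-structure on the path object is obtained from the Berger--Fresse action $\sigma$ of the Barratt--Eccles operad $\EE$ on $N^*(\Delta^1)$ together with an operadic section $\rho\colon\tilde\PP\to\tilde\PP\boxtimes\EE$ (the paper builds $\rho$ from the Markl--Shnider diagonal on the $W$-construction). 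The $\underline{01}^\#$-component $f^{01}$ of such a map obeys the $(\psi_1,\psi_2)$-derivation rule only for the degree-$0$ part of $\EE$; the degree-$1$ part of $\EE$ acts nontrivially on pairs of $\underline{01}^\#$-factors, so the compatibility equation for $f^{01}$ acquires correction terms that are quadratic (and higher) in $f^{01}$ and depend on $\rho$. If you extend $h$ by the naive derivation rule and omit these terms, the right-hand side of your extension equation is no longer a $d_B$-cycle, and the identification of the left-hand side with the differential of the derivation complex fails. The paper's induction survives because, in its filtration, all such correction terms involve $f^{01}$ only in strictly lower filtration degree; but establishing that is precisely the content of Sections 3.2--3.3 and cannot be bypassed by positing a derivation homotopy.
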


\vspace{0.3cm}

To obtain these theorems, the method is first to reduce our study to the case where the differentials of $A$ and $B$ are trivial. Then we use model category structures to make explicit cofibrant replacements of the algebras $A$ and $B$. The crucial point of the proof is a natural filtration of the cooperad $B(\PP \boxtimes \EE)$, which allows us to filter the cofibrant replacements. We construct step by step a map inducing the realization and identify the obstructions to this construction. 

An important thing to notice in our theorems is that only the structures of $\PP$-algebras on $H_*A$ and $H_*B$ appear. So we do not need to know the full structures on $A$ and $B$, but only a part of it.

\vspace{0.3cm}

There are some immediate corollaries to the previous theorems. First, one defines the set of homotopy automorphisms $\text{haut}_{\tilde\PP}(A) :=\{ \phi : A \we A \}$ for $A$ a cofibrant $\tilde\PP$-algebra.
We can consider its connected components for the following homotopy relation 

$$ \phi^0 \sim \phi^1 \ \Leftrightarrow \ \vcenter{
\xymatrix@M=3pt@H=18pt@W=8pt@R=18pt@C=38pt{
A \ar[dr]^{\sim \ \phi^0} \ar[d]_\sim &   \\
A \t \Delta^1 \ar@{.>}[r]^{\exists \phi^t}  & A. \\
A \ar[ur]_{\sim \ \phi^1} \ar[u]^\sim
}}$$
where $A \t \Delta^1$ denotes the cylinder object of $A$.

Consider the map $H_*(-): \pi_0(\text{haut}_{\tilde\PP}(A)) \to \text{aut}_\PP(H_*A)$.
Our obstruction theory implies the following results:
\begin{itemize}
\item If $H\Gamma_\PP^0(H_*A,H_*A)=0$ then $H_*(-)$ is injective.
\item If $H\Gamma_\PP^1(H_*A,H_*A)=0$ then $H_*(-)$ is surjective.
\end{itemize}

Moreover, for any $\PP$-algebra $H$, if $H\Gamma^1_\PP(H, H)=0$, the first theorem implies that all $\tilde\PP$-algebras $A$ such that $H_*(A) = H$ are connected by weak equivalences.

\vspace{1cm}
In Section 1, we recall some results about operads, cooperads and operadic $\Gamma$-homology. In Section 2, we identify the obstructions to the realization. In the last section, we study the obstructions to the uniqueness up to homotopy of the realizations.

\subsection*{Convention} We work in the differential graded setting. We take as ground category 
the category of differential $\ZZ$-graded modules (for short dg-modules) over a fixed field $\KK$. 

All operads $\PP$ will be assumed to be connected in the sense that
$\PP(0) = 0$ and $\PP(1) = \KK$.

\vspace{1.5cm}

\section{Recollections}

\subsection{Model structures}

We give references for the model structures of the categories which are used in this paper.
For general references on the subject, we refer the reader to the survey of Dwyer and Spalinksi \cite{3DS} and the books of Hirschhorn \cite{3Hirsch} and Hovey \cite{3Hovey}. For model structures in the operadic context, we refer to the articles of Hinich \cite{3Hinich}, of Berger and Moerdijk \cite{3BM1} and of Goerss and Hopkins \cite{3GH}, and the book of Fresse \cite{3BouquinBenoit}.

Just recall the following standard definitions:
\begin{enumerate}
 \item The category of dg-modules is equipped  with the model structure such that a morphism is a fibration (resp. a weak equivalence) if it is an epimorphism (resp. induces an isomorphism in homology). 
 \item The category of operads inherits a model structure where fibrations (resp. weak equivalences) are fibrations (resp. weak equivalences) of the underlying dg-modules. 
 \item The category of algebras over a cofibrant operad inherits a model structure where fibrations (resp. weak equivalences) are fibrations (resp. weak equivalences) of the underlying dg-modules. 
\end{enumerate}
In all cases, cofibrations are given by the LLP with respect to acyclic fibrations.

We usually call $\Sigma_*$-module a collection of dg-modules $\{M(r)\}_{r\in\NN}$ where each $M(r)$ is equipped with an action of the $r$-th symmetric group $\Sigma_r$. The category of $\Sigma_*$-modules also inherits a model structure such that fibrations (resp. weak equivalences) are fibrations (resp. weak equivalences) of the underlying dg-modules. Every operad has an underlying $\Sigma_*$-module and we say that an operad is $\Sigma_*$-cofibrant if the underlying $\Sigma_*$-module is cofibrant. The category of algebras over a $\Sigma_*$-cofibrant operad can also be equipped with a semi-model structure, but we will not need this refinement.

We will use a cofibrant replacement of operads given by the cobar-bar duality, which can be found in the paper of Getzler and Jones \cite{3GJ} in characteristic $0$, and the paper of Berger and Moerdijk \cite[Section 8.5]{3BM2} in our more general context.
We denote by $B$ the bar construction of an operad, introduced in \cite{3GK}, and by $B^c$ the cobar construction, introduced in \cite{3GJ}.
Recall that an element of the bar (or cobar) construction $B(\PP)$ can be seen as a tree labelled by elements of $\PP$. Thus the bar (and cobar) construction is equipped with a weight, given by the number of vertices of the tree representing an element. 
The operad $\EE$ denotes the Barratt-Eccles operad, whose definition is recalled later in Section \ref{actionE}, and $\boxtimes$ denotes the arity-wise tensor product of $\Sigma_*$-modules, i.e.  $(\PP \boxtimes \EE) (r) = \PP(r) \t \EE(r)$ for all $r \in \NN$.

\begin{fact}[{\cite[Theorem 8.5.4]{3BM2}}]
Let $\PP$ be an operad.

The operad $B^c(B(\PP \boxtimes \EE))$ is a cofibrant replacement of the operad $\PP$.
\end{fact}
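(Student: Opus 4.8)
The statement asserts two things: that $B^c(B(\PP \boxtimes \EE))$ is cofibrant as an operad, and that it is weakly equivalent to $\PP$. The plan is to introduce the intermediate operad $\QQ := \PP \boxtimes \EE$ and to factor the comparison as
\[
B^c(B(\QQ)) \we \QQ \we \PP,
\]
reducing the problem to two essentially independent inputs: the general cobar-bar duality applied to $\QQ$, and the fact that passing from $\QQ$ to $\PP$ changes neither the homotopy type nor, crucially, the good homotopical properties of the underlying $\Sigma_*$-module.

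First I would treat the right-hand map. The Barratt-Eccles operad $\EE$ is a Hopf operad with acyclic components, each $\EE(r)$ being a free $\KK[\Sigma_r]$-module quasi-isomorphic to $\KK$; its coalgebra structure is precisely what makes the arity-wise tensor product $\PP \boxtimes \EE$ into an operad, and the augmentation $\EE \to \mathsf{Com}$ a morphism of Hopf operads. The induced map $\QQ = \PP \boxtimes \EE \to \PP \boxtimes \mathsf{Com} = \PP$ is then a morphism of operads which, arity by arity, is the map $\PP(r) \t \EE(r) \to \PP(r) \t \KK = \PP(r)$. Over a field this is a quasi-isomorphism by the K\"unneth formula, since $\EE(r) \we \KK$. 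The same freeness of the $\EE(r)$ shows that each $\PP(r) \t \EE(r)$ is a free, hence cofibrant, $\Sigma_r$-module, so that $\QQ$ is a $\Sigma_*$-cofibrant operad. This is the whole point of tensoring with $\EE$: it replaces $\PP$ by a weakly equivalent operad whose underlying collection is $\Sigma_*$-cofibrant.

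It then remains to apply the cobar-bar duality to the connected $\Sigma_*$-cofibrant operad $\QQ$. On one hand, $B^c$ of any connected cooperad is a quasi-free operad, free as a graded operad on the desuspension of the coaugmentation coideal; since the generating collection $B(\QQ)$ is assembled from (de)suspensions and composition products of the $\Sigma_*$-cofibrant collection $\QQ$, it is itself $\Sigma_*$-cofibrant, and a quasi-free operad on a $\Sigma_*$-cofibrant collection is cofibrant. On the other hand, the counit $B^c(B(\QQ)) \to \QQ$ of the bar-cobar adjunction is a weak equivalence. Composing with $\QQ \we \PP$ yields the desired cofibrant replacement.

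The main obstacle is the last point: the fact that the cobar-bar counit is a weak equivalence in arbitrary characteristic. In characteristic $0$ (the Getzler-Jones setting) this is classical and needs no cofibrancy hypothesis. In positive characteristic one proves it by filtering $B^c(B(\QQ))$ by the total weight, that is, the number of vertices, and analysing the associated spectral sequence, where the acyclicity of each graded piece comes from the contracting homotopy supplied by the augmentation of $\QQ$ in the manner of an extra-degeneracy argument. This argument computes the right thing only when the generating collection is $\Sigma_*$-cofibrant, so that the passage to $\Sigma_r$-(co)invariants is exact and the K\"unneth identifications on the graded pieces remain valid --- which is exactly what the previous step guarantees. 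This is where the hypothesis built into $\QQ = \PP \boxtimes \EE$ becomes indispensable.
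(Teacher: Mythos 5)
The paper gives no argument for this Fact at all: it is stated as a direct citation of Berger--Moerdijk \cite[Theorem 8.5.4]{3BM2}, so there is no internal proof to compare against. Your sketch is, in substance, the standard proof of that cited result in its chain-complex incarnation: Berger and Moerdijk phrase the resolution as the cubical $W$-construction $W_\square(\PP\boxtimes\EE)$, which the paper later (Section 3.2) identifies with $B^c(B(\PP\boxtimes\EE))$, and your factorization $B^c(B(\QQ))\we\QQ\we\PP$ with $\QQ=\PP\boxtimes\EE$ is exactly the decomposition underlying their argument, so the route is essentially the same rather than genuinely different. Two small corrections of emphasis: first, the arity-wise (Hadamard) tensor product of two operads is an operad with componentwise composition, so the coalgebra/Hopf structure on $\EE$ is not what makes $\PP\boxtimes\EE$ an operad (that structure is needed elsewhere, e.g.\ for the action on $N^*(\Delta^1)$ and the cylinder object); second, over a field and for connected operads the acyclicity of the counit $B^c(B(\QQ))\to\QQ$ does not in fact require $\Sigma_*$-cofibrancy, since the trees indexing the free operad and cofree cooperad have no automorphisms fixing their labelled leaves and hence no coinvariants intervene --- the place where tensoring with $\EE$ is genuinely indispensable is the operadic cofibrancy of the quasi-free operad $B^c(B(\QQ))$, which needs its generating collection $B(\QQ)$ to be $\Sigma_*$-cofibrant. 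Neither point affects the validity of your outline.
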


If $\QQ$ is a cofibrant replacement of an operad $\PP$, working with algebras over $\QQ$ is equivalent to working with algebras over $B^c(B(\PP \boxtimes \EE))$. In this paper, we always pick this particular cofibrant replacement.

\subsection{Coalgebras over cooperads}
Let $\DD$ be a cooperad.
In the following series of propositions, we recall how the structure of $B^c(\DD)$-algebra on $A$ can be explicitely encoded in a quasi-cofree coalgebra $\DD(A)$.  We need precise formulas for our study. 

These results have been first given in the preprint of Getzler and Jones \cite{3GJ}. 
But we use them in the wider context of $\ZZ$-graded modules and over a field of any characteristic, and we refer to \cite{3Arolla2008} for the generalization in the latter setting.

Let $\DD$ be a cooperad and $A$ a dg-module.


We may represent an element $\gamma \in \DD(n)$ by a corolla with n inputs $\vcenter{
	\xymatrix@M=3pt@H=4pt@W=3pt@R=8pt@C=4pt{
	1 \ar@{-}[dr]\ar@{.}[rr] && n\ar@{-}[dl] \\
	&  \gamma\ar@{-}[d] &  \\
	&&}  } $.

We consider the total coproduct and the quadratic coproduct  of a cooperad structure, which send the element $\gamma$ to a composed element arranged on a tree.

The total coproduct denoted by $\nu$ maps an element $\gamma \in \DD$ to a sum of formal composites of elements represented by
$$\nu \left(\vcenter{
	\xymatrix@M=3pt@H=4pt@W=3pt@R=8pt@C=4pt{
	1 \ar@{-}[dr]\ar@{.}[rr] && n\ar@{-}[dl] \\
	&  \gamma\ar@{-}[d] &  \\
	&&}  }  \right) =
\sum_{\nu}\vcenter{
\xymatrix@M=3pt@H=4pt@W=3pt@R=8pt@C=4pt{
 i_{1,1}\ar@{.}[rr]\ar@{-}[dr] & & i_{1,s_1}\ar@{-}[dl] &	&i_{r,1}\ar@{.}[rr]\ar@{-}[dr] & & i_{r,s_r}\ar@{-}[dl] \\
&\gamma''_1\ar@{-}[drr]\ar@{.}[rrrr]&& & & \gamma''_r \ar@{-}[dll] & \\
	&&& \gamma'\ar@{-}[d] && \\
	&&&&}}$$
where $\gamma'$, $\gamma''_1, \ldots, \gamma''_r$ are elements of $\DD$ and the entries form a multi-shuffle of $\{1, \ldots, n \}$ 
(i.e. $ i_{1,1}<  i_{2,1} < \ldots <  i_{r,1}$ and  $i_{k,1} <  i_{k,2} < \ldots < i_{k,s_k}$ for all $1\leq k \leq r$).

To avoid too many indices, we will write such a sum in the following form:
$$\sum_{\nu}\vcenter{
\xymatrix@M=3pt@H=4pt@W=3pt@R=8pt@C=4pt{
 i_*\ar@{.}[rr]\ar@{-}[dr] & & i_*\ar@{-}[dl] &	&i_*\ar@{.}[rr]\ar@{-}[dr] & & i_*\ar@{-}[dl] \\
&\gamma''_*\ar@{-}[drr]\ar@{.}[rrrr]&& & & \gamma''_* \ar@{-}[dll] & \\
	&&& \gamma'\ar@{-}[d] && \\
	&&&&&&}}$$

The quadratic coproduct of an element $\gamma \in \DD$ is denoted by $\nu_2(\gamma)$ and represented by
$$\nu_2 \left(\vcenter{
	\xymatrix@M=3pt@H=4pt@W=3pt@R=8pt@C=4pt{
	1 \ar@{-}[dr]\ar@{.}[rr] && n\ar@{-}[dl] \\
	&  \gamma\ar@{-}[d] &  \\
	&&}  }  \right) =
\sum_{\nu_2}\vcenter{
\xymatrix@M=3pt@H=4pt@W=3pt@R=8pt@C=4pt{
 	&&j_1\ar@{.}[rr]\ar@{-}[dr] & & j_\ell\ar@{-}[dl] \\
&i_1\ar@{-}[dr]\ar@{.}[rr]& & \gamma''\ar@{-}[dl] \ar@{.}[r] & i_k\ar@{-}[dll] & \\
	&& \gamma'\ar@{-}[d] && \\
	&&&&}}$$
where $\gamma'$ and the $\gamma''$ are elements of $\DD$ and the $\{i_1, \ldots, i_k \} \coprod \{j_1, \ldots, j_\ell\}$ run over partitions of $\{1, \ldots, n \}$. 

Let $A$ be a dg-module, where the differential is denoted by $d_A$.
Recall that $\DD(A)$ is the cofree connected coalgebra given by $$\DD(A)= \bigoplus (\DD(r) \t A^{\t r})_{\Sigma_r}.$$
The element $\gamma(a_1, \ldots, a_r) \in \DD(A)$ is associated to the tensor $\gamma \t (a_1, \ldots, a_r)$. 
We represent an element in $\DD(A)$ by a corolla with inputs indexed by elements of $A$.

\begin{subprop}[{\cite[Proposition 2.14]{3GJ}, \cite[Proposition 4.1.3]{3Arolla2008}}]
For a cofree coalgebra $\DD(A)$, we have a bijective correspondance between $\DD$-coderivations $\partial : \DD(A) \to \DD(A)$ and homomorphisms $\alpha : \DD(A) \to A$. The homomorphism $\alpha$ associated to a coderivation $\partial$ is given by the compositive with the canonical projection. Conversely, the coderivation $\partial_\alpha$ associated to $\alpha$ is determined by 
$$\partial_\alpha \left(\vcenter{
	\xymatrix@M=3pt@H=4pt@W=3pt@R=8pt@C=4pt{
	a_1 \ar@{-}[dr]\ar@{.}[rr] && a_n\ar@{-}[dl] \\
	&  \gamma\ar@{-}[d] &  \\
	&&}  }  \right) =
\sum_i \pm \left(\vcenter{
	\xymatrix@M=3pt@H=4pt@W=3pt@R=8pt@C=4pt{
	a_1 \ar@{-}[dr]\ar@{.}[r] & \alpha(a_i)\ar@{-}[d]\ar@{.}[r]& a_n\ar@{-}[dl] \\
	&  \gamma\ar@{-}[d] &  \\
	&&}  }  \right)
+
\sum_{\nu_2} \pm \vcenter{
\xymatrix@M=3pt@H=4pt@W=3pt@R=8pt@C=4pt{
 	&a_*\ar@{.}[rr]\ar@{-}[dr] & \save+<-31pt,-11pt>*{\alpha \Biggl[} +<57pt,0pt>*{\Biggl]}\restore & a_*\ar@{-}[dl] \\
a_*\ar@{-}[drr]\ar@{.}[r]& & \gamma''\ar@{-}[d]  &\ar@{.}[r]& a_*\ar@{-}[dll] & \\
	&& \gamma'\ar@{-}[d] && \\
	&&&&}}$$
for every $\gamma(a_1, \ldots, a_n)$ in $\DD(A)$.
The first term corresponds to $\alpha$ applied to $a_i \in A \subset \DD(A)$. For the second term, we use the quadratic coproduct $\nu_2$ and then apply $\alpha$ on the upper corolla which represents an element in $\DD(A)$.
\end{subprop}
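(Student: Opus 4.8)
The plan is to read this statement as the linearized form of the universal property of the cofree $\DD$-coalgebra and then to match the resulting abstract coderivation with the explicit tree formula. One direction is immediate: given a $\DD$-coderivation $\partial \colon \DD(A) \to \DD(A)$, composing with the canonical projection $\pi \colon \DD(A) \to A$ onto the cogenerators (the arity-one summand $\DD(1) \t A \cong A$, using $\DD(1) = \KK$) produces $\alpha = \pi \circ \partial$, a map of the same internal degree as $\partial$. All the content lies in showing that $\alpha$ conversely determines $\partial$ and that the recipe recovering $\partial$ from $\alpha$ is precisely the displayed formula; so I would spend the proof on the inverse construction and on the uniqueness that legitimizes it.

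First I would fix the meaning of a $\DD$-coderivation. The cofree coalgebra $\DD(A)$ carries its structure map $\rho \colon \DD(A) \to \DD(\DD(A))$ built from the total coproduct $\nu$ of the cooperad, and $\partial$ is a coderivation exactly when $\rho \circ \partial$ agrees with the sum of the maps obtained by inserting $\partial$ into each slot of $\rho$. Because $\DD$ is connected ($\DD(0)=0$, $\DD(1)=\KK$), the coalgebra $\DD(A)$ is conilpotent and carries the weight filtration coming from the number of vertices of the representing trees. I would then prove uniqueness by induction on this weight: the components of $\rho(\partial x)$ lying in lower weight are entirely governed by $\pi \circ \partial = \alpha$ evaluated on the subtrees of $x$, so the coderivation identity forces the value of $\partial x$ once $\alpha$ is prescribed. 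This single argument simultaneously shows that $\partial \mapsto \alpha$ is injective and dictates what the inverse $\partial_\alpha$ must be.

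Next I would establish existence by taking the displayed formula as the definition of $\partial_\alpha$ and verifying two points. The identity $\pi \circ \partial_\alpha = \alpha$ is checked by extracting the arity-one (cogenerator) component of the two sums and invoking the counit axiom of $\DD$, which guarantees that the surviving pieces reassemble to $\alpha$. The substantive point is that $\partial_\alpha$ satisfies the coderivation identity relative to $\rho$. For this I would compute $\rho \circ \partial_\alpha$ on a generator $\gamma(a_1, \ldots, a_n)$ and compare it termwise with $\sum (\text{insert } \partial_\alpha) \circ \rho$, using the coassociativity of the cooperad, which relates the iterated total coproduct to the interaction of $\nu$ with the quadratic coproduct $\nu_2$. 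The two families of terms in the formula are exactly the two ways the corestriction $\alpha$ can land on the cogenerators after decomposition: either on a single leaf $a_i \in A \subset \DD(A)$, giving $\sum_i \pm\, \gamma(a_1, \ldots, \alpha(a_i), \ldots, a_n)$, or on the upper corolla $\gamma''$ produced by $\nu_2$, giving the second sum.

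The main obstacle I anticipate is the combinatorial and sign bookkeeping in this last comparison: one must track the Koszul signs arising when $\alpha$, a map of fixed internal degree, is commuted past the inputs $a_j$, and one must verify that coassociativity of $\nu$ reproduces the grafting pattern encoded by $\nu_2$ with no extraneous terms. Since the whole argument uses only the coalgebra structure map $\rho$, the conilpotence coming from connectedness of $\DD$, and coassociativity together with the counit axiom, it carries over verbatim to $\ZZ$-graded modules over a field of arbitrary characteristic; in particular no characteristic-zero or $\NN$-grading hypothesis enters, which is exactly why the generalization of \cite{3GJ} recorded in \cite{3Arolla2008} applies in our setting.
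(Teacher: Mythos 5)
The paper does not prove this proposition; it is recalled with citations to \cite{3GJ} and \cite{3Arolla2008}, and the argument in those sources is exactly the one you outline: uniqueness of a coderivation with prescribed corestriction via conilpotence of the cofree coalgebra (equivalently, post-composing the coderivation identity with $\DD(\pi)$ and using that $\DD(\pi)\circ\rho=\mathrm{id}$, which simultaneously forces the displayed formula), then existence by checking the coderivation identity termwise against coassociativity, with the two sums corresponding to $\alpha$ landing on a leaf versus on the upper corolla of a $\nu_2$-decomposition. Your proposal is correct and takes essentially the same route as the cited proofs, including the observation that only connectedness of $\DD$ and the ground field hypothesis are needed, so the statement holds in the $\ZZ$-graded, arbitrary-characteristic setting of the paper.
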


\begin{subprop}[{\cite[Proposition 4.1.4]{3Arolla2008}}]
Let $\alpha : \DD(A) \to A$ be a homomorphism of degree -1 such that $\alpha_{|A}=0$.

A $\DD$-coderivation of degree $-1$, $\partial_\alpha : \DD(A) \to \DD(A)$ so that $(\DD(A), \partial_\alpha)$ defines a differential graded quasi-cofree coalgebra if and only if the homomorphism $\alpha : \DD(A) \to A$ satisfies the relation
\begin{equation}\label{prop414}
\delta(\alpha) \left(\vcenter{
	\xymatrix@M=3pt@H=4pt@W=3pt@R=8pt@C=4pt{
	a_1 \ar@{-}[dr]\ar@{.}[rr] && a_n\ar@{-}[dl] \\
	&  \gamma\ar@{-}[d] &  \\
	&&}  }  \right) 
+
\sum_{\nu_2} \pm \alpha \left(\vcenter{
\xymatrix@M=3pt@H=4pt@W=3pt@R=8pt@C=4pt{
 	&a_*\ar@{.}[rr]\ar@{-}[dr] & \save+<-31pt,-11pt>*{\alpha \Biggl[} +<57pt,0pt>*{\Biggl]}\restore & a_*\ar@{-}[dl] \\
a_*\ar@{-}[drr]\ar@{.}[r]& & \gamma''\ar@{-}[d]  &\ar@{.}[r]& a_*\ar@{-}[dll] \\
	&& \gamma'\ar@{-}[d] && \\
	&&&}}\right)
=0
\end{equation}
for every element $\gamma(a_1, \ldots, a_n)$ in $\DD(A)$, where $\delta(\alpha)$ denotes $d_A \circ \alpha \pm \alpha \circ \partial_\alpha$.
\end{subprop}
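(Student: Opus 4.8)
The plan is to translate the condition that $(\DD(A),\partial_\alpha)$ is a differential graded quasi-cofree coalgebra into a single square-zero equation for one coderivation, and then to detect that equation on its corestriction to $A$ by means of the previous proposition. The cofree coalgebra $\DD(A)$ carries as total differential the $\DD$-coderivation $\partial = d + \partial_\alpha$, where $\partial_\alpha$ is the degree $-1$ coderivation attached to $\alpha$ and $d$ is the internal differential induced by $d_A$ on the inputs; equivalently, by the previous proposition $d$ is the coderivation attached to $d_A \colon A \to A \subset \DD(A)$, so its quadratic part is absent. To say that $(\DD(A),\partial)$ is a differential graded coalgebra is exactly to say that $\partial$ is square-zero, i.e.\ $\partial^2 = 0$.

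First I would record that $\partial$ is an odd $\DD$-coderivation, so that its graded square $\partial^2 = \tfrac12[\partial,\partial]$ is again a $\DD$-coderivation, now of degree $-2$, since the graded commutator of two coderivations is a coderivation. The decisive step is then the injectivity half of the correspondence of the previous proposition: a $\DD$-coderivation is completely determined by its corestriction along the canonical projection $\pi \colon \DD(A) \to A$. Consequently $\partial^2 = 0$ if and only if $\pi \circ \partial^2 = 0$, and the whole statement reduces to the computation of this one map $\DD(A) \to A$. The point of routing through the corestriction is precisely that it bypasses the full expansion of $\partial^2$ (which would require coassociativity of $\DD$ and a much heavier sign analysis).

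It then remains to identify $\pi\circ\partial^2$ with the left hand side of \eqref{prop414}. Using $d^2 = 0$ together with the identities $\pi\circ\partial_\alpha = \alpha$ and $\pi\circ d = d_A\circ\pi$, one expands
$$\pi\circ\partial^2 = d_A\circ\alpha \pm \alpha\circ d + \alpha\circ\partial_\alpha ,$$
whose first two terms form the induced differential $\delta(\alpha) = d_A\circ\alpha \pm \alpha\circ d$ on the mapping complex $\Hom(\DD(A),A)$. For the last term I would substitute the explicit formula of the previous proposition for $\partial_\alpha$: because $\alpha_{|A}=0$, its linear summand $\sum_i \pm\, \gamma(a_1,\dots,\alpha(a_i),\dots,a_n)$ vanishes and only the quadratic summand built from $\nu_2$ survives, so that applying $\alpha$ once more reproduces exactly the sum $\sum_{\nu_2}\pm\,\alpha(\dots)$ of \eqref{prop414}, in which $\alpha$ is applied first to the upper corolla and then to the whole composite. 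This yields $\pi\circ\partial^2 = \delta(\alpha)(\gamma(a_1,\dots,a_n)) + \sum_{\nu_2}\pm\,\alpha(\dots)$, whence $\partial^2 = 0$ is equivalent to relation \eqref{prop414}, and both implications of the proposition follow at once.

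The step I expect to be the main obstacle is the Koszul-sign bookkeeping in the computation of $\pi\circ\partial^2$, together with the verification that the corestriction computation is complete. The subtle points are that the vanishing $\alpha_{|A}=0$ is what removes the linear part of $\partial_\alpha$, so that the twice-applied $\alpha$ assembles cleanly into the $\nu_2$-sum with no spurious internal-differential cross terms beyond $\delta(\alpha)$, and that one must check the quadratic term of $\partial_\alpha$ never lands in the arity-one part of $\DD(A)$, so that composing with $\pi$ reads off $\alpha$ rather than $d_A$ there. Reconciling the signs produced by commuting the odd map $\alpha$ past the tensor factors with those displayed in \eqref{prop414} is the remaining delicate bookkeeping, but it is mechanical once the conceptual reduction above is in place.
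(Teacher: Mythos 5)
The paper does not actually prove this statement: it is quoted as a recollection from Fresse \cite[Proposition 4.1.4]{3Arolla2008} (following Getzler--Jones), so there is no in-paper argument to compare against, and your proposal correctly reconstructs the standard proof from that reference — reduce the square-zero condition for the twisted coderivation to its corestriction to $A$, using the fact that a $\DD$-coderivation on a cofree coalgebra is determined by its corestriction, and then identify $\pi\circ\partial^2$ with the left-hand side of \eqref{prop414}, the linear part of $\partial_\alpha$ being killed by $\alpha_{|A}=0$. Two small points of bookkeeping: the internal differential of $\DD(A)$ is induced not only by $d_A$ on the inputs but also by the differential of the dg cooperad $\DD$ itself (relevant in this paper, where $\DD=B(\PP\boxtimes\EE)$ has a nontrivial differential), and accordingly the symbol $\delta(\alpha)$ in \eqref{prop414} should be read as the differential of $\alpha$ in the hom-complex $\Hom(\DD(A),A)$, i.e. $d_A\circ\alpha\pm\alpha\circ\partial_{\DD(A)}$ with $\partial_{\DD(A)}$ the canonical (untwisted) differential of the cofree coalgebra — which is exactly what your computation of $\pi\circ\partial^2$ produces.
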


\begin{subprop}[{\cite[proposition 2.15]{3GJ}, \cite[Proposition 4.1.5]{3Arolla2008}}]
A  $B^c(\DD)$-algebra structure on a dg-module $A$ is equivalent to a map $\alpha : \DD(A) \to A$ which satisfies Equation \eqref{prop414} and such that the restriction $\alpha_{|A}$ vanishes.
\end{subprop}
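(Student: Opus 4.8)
The plan is to identify a $B^c(\DD)$-algebra structure on $A$ with a quasi-cofree differential graded $\DD$-coalgebra structure on $\DD(A)$, and then to read off the statement from the two preceding propositions.

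First I would recall the explicit shape of the operad $B^c(\DD)$. As a \emph{graded} operad, ignoring the differential, it is the free operad $\mathcal F(\Sigma^{-1}\overline{\DD})$ generated by the desuspension of the coaugmentation coideal $\overline{\DD}$ of $\DD$; its differential $\partial$ has one part coming from the internal differential of $\DD$ and one part dual to the piece of the cooperadic coproduct recorded by the quadratic coproduct $\nu_2$. With respect to the weight (number of vertices), the generators sit in weight one.

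Next I would apply the free--forgetful adjunction for operads. Since an algebra structure on $A$ over an operad is the same as an operad morphism to the endomorphism operad $\mathrm{End}_A$, the universal property of $\mathcal F$ shows that a structure on $A$ over the underlying graded operad $\mathcal F(\Sigma^{-1}\overline{\DD})$ is exactly a morphism of $\Sigma_*$-modules $\Sigma^{-1}\overline{\DD}\to \mathrm{End}_A$, that is, a family of equivariant maps $\overline{\DD}(r)\t A^{\t r}\to A$. After the desuspension these assemble into a single degree $-1$ map $\overline{\DD}(A)\to A$, which I extend by zero on the canonical summand $A\subset\DD(A)$ to obtain precisely a degree $-1$ map $\alpha\colon\DD(A)\to A$ with $\alpha_{|A}=0$. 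This bijection concerns only the underlying graded structures.

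It then remains to match the differential conditions. Requiring that the structure map respect $\partial$ (equivalently, that the total differential on $A$ as a $B^c(\DD)$-algebra square to zero) translates, through the adjunction, into Equation \eqref{prop414}: the $d_\DD$-part of $\partial$ yields the term $\delta(\alpha)=d_A\circ\alpha\pm\alpha\circ\partial_\alpha$, while the part of $\partial$ dual to $\nu_2$ splits a generator into two nested corollas and, after two successive applications of $\alpha$, produces exactly the $\nu_2$-summand of \eqref{prop414}. Invoking the preceding proposition, which says that $\alpha$ with $\alpha_{|A}=0$ satisfies \eqref{prop414} precisely when $\partial_\alpha$ makes $(\DD(A),\partial_\alpha)$ a quasi-cofree dg $\DD$-coalgebra, reduces this to the standard bar--cobar correspondence, under which the relative bar construction of the $B^c(\DD)$-algebra $A$ is the cofree coalgebra $\DD(A)$ equipped with the twisting coderivation $\partial_\alpha$. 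I expect the only delicate point to be this last matching: checking that the cobar differential, which is dual to the coproduct $\nu_2$ of $\DD$, corresponds term by term, and with the correct signs after the operadic desuspension $\Sigma^{-1}$, to the quadratic term of \eqref{prop414}. The rest is a formal use of the free--forgetful adjunction together with the two preceding propositions.
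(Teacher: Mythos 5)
Your argument is correct and follows the same route as the sources this statement rests on: the paper gives no proof of this proposition, deferring entirely to \cite{3GJ} and \cite{3Arolla2008}, and the proofs there run exactly as you outline --- $B^c(\DD)$ is quasi-free on $\Sigma^{-1}\overline{\DD}$, the free--forgetful adjunction identifies graded algebra structures with degree $-1$ maps $\overline{\DD}(A)\to A$ extended by zero on $A$, and compatibility with the two parts of the cobar differential (the internal one and the one dual to $\nu_2$) is precisely Equation \eqref{prop414}, matched against the preceding proposition. The sign bookkeeping through the operadic desuspension that you flag as the delicate point is indeed the only place requiring care, and it is carried out in the cited references.
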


When we are given an operad morphism $B^c(\DD) \to \QQ$, we have a functor which, to any $\DD$-coalgebra $C$, associates a quasi-free $\QQ$-algebra $R_\QQ(C)=(\QQ(C), \partial)$ for some twisting differential $\partial$ (cf. \cite{3GJ} or \cite[Section 4.2.1]{3Arolla2008}).

We apply this construction to $\DD=B(\PP \boxtimes\EE)$, the morphism $id : B^c(\DD) \to B^c(\DD)=\tilde\PP$ and the coalgebra $C=(\DD(A),\partial_\alpha)$ associated to a $\tilde\PP$-algebra $A$ (the action is denoted by $\alpha$). We get the following result:

\begin{subprop}[{\cite[Theorem 2.19]{3GJ}, \cite[Theorem 4.2.4]{3Arolla2008}}]\label{rempcofib}
Let $A$ be an algebra over $\tilde\PP$ and let $\alpha$ denote the action. Let $D$ denote $B(\PP \boxtimes \EE)$.
The augmentation $\epsilon : R_{\tilde\PP}(\DD(A),\partial_\alpha)=(\tilde\PP(\DD(A),\partial_\alpha),\partial) \to A$ defines a weak equivalence and $(\tilde\PP(\DD(A),\partial_\alpha),\partial)$ forms a cofibrant replacement of $A$ in the category of $\tilde\PP$-algebras.
\end{subprop}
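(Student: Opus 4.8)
The statement contains two assertions: that the quasi-free $\tilde\PP$-algebra $(\tilde\PP(\DD(A),\partial_\alpha),\partial)$ is cofibrant, and that the augmentation $\epsilon$ to $A$ is a weak equivalence. I would treat them in turn. The cofibrancy is the easier half. Since we work over a field, the underlying dg-module $\DD(A)=\bigoplus_r(\DD(r)\t A^{\t r})_{\Sigma_r}$ is cofibrant (every dg-module over $\KK$ is cofibrant in the given model structure). As $\tilde\PP$ is a cofibrant operad, the free $\tilde\PP$-algebra functor $\tilde\PP(-)$ is left Quillen, so $\tilde\PP(\DD(A))$ with its purely free differential is a cofibrant $\tilde\PP$-algebra. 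The twisting terms of $\partial$ strictly lower the weight of $\DD=B(\PP\boxtimes\EE)$ (the number of vertices): indeed the coderivation $\partial_\alpha$ contracts a vertex through the quadratic coproduct $\nu_2$ before applying $\alpha$. Hence filtering the generators $\DD(A)$ by this weight exhibits $(\tilde\PP(\DD(A)),\partial)$ as a sequential colimit of free cell extensions, and it is therefore cofibrant.

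For the weak equivalence I would argue by a filtration and a spectral sequence comparison. Both $\tilde\PP=B^c(\DD)$ and $\DD=B(\PP\boxtimes\EE)$ carry the weight grading counting vertices of the underlying trees, and $A$ contributes its tensor degree inside $\DD(A)$; filter $(\tilde\PP(\DD(A)),\partial)$ and $A$ by the resulting total weight, so that $\epsilon$ becomes a filtered map. The internal differential $d_A$, the full coderivation $\partial_\alpha$ and the twisting coming from the $R_{\tilde\PP}$-construction all strictly lower this filtration, so the associated graded differential $\partial_0$ retains exactly the bar--cobar duality differential of the pair $(B^c,B)$. Thus $(\tilde\PP(\DD(A)),\partial_0)$ is the image, under the free $\tilde\PP$-algebra functor, of the classical bar--cobar resolution of $A$, and it remains to see that $\epsilon$ induces an isomorphism on $E^1$.

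The heart of the argument, and the main obstacle, is the acyclicity of this associated graded. I would establish it by an explicit contracting homotopy. The coaugmentation $\KK=\DD(1)\hookrightarrow\DD$ of the cooperad (available because $\PP$ is connected and $\EE(1)=\KK$, so $\DD(1)\supset\KK$) yields the canonical inclusion $\iota\colon A\hookrightarrow\DD(A)\subset\tilde\PP(\DD(A))$ of generators. Using it one builds a degree $+1$ map $h$ that transfers one vertex from the cobar factor $B^c(\DD)$ into the cofree coalgebra factor $\DD(A)$, and one checks the standard relation $\partial_0 h+h\,\partial_0=\mathrm{id}-\iota\,\epsilon_0$, which expresses the contractibility of the bar--cobar resolution. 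This shows $\epsilon$ is an isomorphism on $E^1$. Because $\PP$ is connected, the weight filtration is exhaustive and bounded below in each homological degree, so the spectral sequence converges even in the unbounded $\ZZ$-graded setting, and $\epsilon$ is a quasi-isomorphism; combined with cofibrancy, $(\tilde\PP(\DD(A),\partial_\alpha),\partial)$ is a cofibrant replacement of $A$. The delicate points are the sign- and $\Sigma_*$-equivariant construction of $h$ and the verification of the homotopy identity against the coproducts $\nu,\nu_2$ defining $\partial_\alpha$, together with the boundedness ensuring convergence.
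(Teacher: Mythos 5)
The paper does not prove this statement: it is quoted as a recollection, with the proof deferred to \cite[Theorem 2.19]{3GJ} and \cite[Theorem 4.2.4]{3Arolla2008}. Your sketch reconstructs essentially the standard argument of those sources (weight filtration of the generators for cofibrancy over a field; a filtration whose associated graded is the two-sided cobar--bar complex, contracted by the usual vertex-transferring homotopy, for the weak equivalence), so in outline it is correct and there is nothing in the paper to contrast it with. One internal inconsistency to fix: you cannot simultaneously claim that the twisting differential of the $R_{\tilde\PP}$-construction ``strictly lowers'' the filtration and that the associated graded ``retains exactly the bar--cobar duality differential.'' The filtration that makes the second claim true is the one by tensor degree in $A$ (number of $A$-inputs of $\DD(A)$): with respect to it, $d_A$ preserves and $\partial_\alpha$ strictly lowers the filtration, while the bar differential of $\DD$, the cobar differential of $\tilde\PP=B^c(\DD)$ and the twisting between them all preserve it and survive to $E^0$, where the associated graded is $(B^c(\DD)\circ_\kappa\DD)(A)$; acyclicity of the twisted composite $B^c(\DD)\circ_\kappa\DD\simeq I$ plus exactness of Schur functors over a field then gives the $E^1$-isomorphism. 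With that correction, and the bounded-below exhaustive filtration argument for convergence that you already flag, the sketch is sound.
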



In this context, to study morphisms in the homotopy category of $\tilde\PP$-algebras, we just have to study morphisms of quasi-cofree $\DD$-coalgebras. The following two propositions show how to reduce our study to the corestrictions of such morphisms.

\begin{subprop}[{\cite[Observation 4.1.7]{3Arolla2008}}]\label{125}
The homomorphisms $\phi : \DD(A) \to \DD(B)$ of degree $0$ and commuting with coalgebra structures are in bijection with homomorphisms of dg-modules $f : \DD(A) \to B$. The homomorphism $f$ associated to $\phi$ is given by the composite of $\phi$ with the projection. Conversely, the homomorphism $\phi=\phi_f$ associated to $f$ is determined by the formula 
$$\phi_f\left(\vcenter{
	\xymatrix@M=3pt@H=4pt@W=3pt@R=8pt@C=4pt{
	a_1 \ar@{-}[dr]\ar@{.}[rr] && a_n\ar@{-}[dl] \\
	&  \gamma\ar@{-}[d] &  \\
	&&}  }  \right) 
=
\sum_{\nu}  \left(\vcenter{
\xymatrix@M=3pt@H=4pt@W=3pt@R=8pt@C=4pt{
 	a_*\ar@{.}[rr]\ar@{-}[dr] & \save+<-31pt,-11pt>*{f \Biggl[} +<57pt,0pt>*{\Biggl]}\restore & a_*\ar@{-}[dl] && a_*\ar@{.}[rr]\ar@{-}[dr] & \save+<-31pt,-11pt>*{f \Biggl[} +<57pt,0pt>*{\Biggl]}\restore & a_*\ar@{-}[dl]\\
 & \gamma''_*\ar@{-}[drr]  &\ar@{.}[rr]&&& \gamma''_*\ar@{-}[dll] & \\
	&&& \gamma'\ar@{-}[d] &&& \\
	&&&&}}\right)$$
for every element  $\gamma(a_1, \ldots, a_n)$ in $\DD(A)$.
We use the total coproduct  and we apply $f$ to all upper corrolas.
\end{subprop}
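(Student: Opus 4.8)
The statement is the universal property of the cofree $\DD$-coalgebra: a coalgebra map into $\DD(B)$ is freely determined by its corestriction to the cogenerators $B$. The plan is to produce the two maps of the claimed bijection and to check that they are mutually inverse. In one direction, given a coalgebra homomorphism $\phi : \DD(A) \to \DD(B)$, I set $f := \pi \circ \phi$, where $\pi : \DD(B) \to B$ is the canonical projection onto the summand $\DD(1) \t B = \KK \t B = B$. In the other direction, given a dg-module map $f : \DD(A) \to B$, I define $\phi_f$ by the displayed formula: decompose an input $\gamma(a_1,\ldots,a_n)$ by the total coproduct $\nu$, and apply $f$ to each of the upper corollas, leaving the bottom vertex $\gamma'$ untouched. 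Since $f$ has degree $0$ and $\nu$ preserves degree, $\phi_f$ is a degree-$0$ map $\DD(A) \to \DD(B)$.

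Next I would check that $\pi \circ \phi_f = f$. Projecting $\phi_f(\gamma(a_1,\ldots,a_n))$ onto the summand $\DD(1)\t B$ kills every term of $\nu$ whose bottom vertex $\gamma'$ lies in $\DD(r)$ with $r \geq 2$; by connectedness of $\DD$ only the trivial decomposition, with $\gamma' \in \DD(1) = \KK$ and a single upper corolla equal to the whole element, survives. That term is exactly $f(\gamma(a_1,\ldots,a_n))$, so the corestriction of $\phi_f$ recovers $f$.

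The crux is to show that $\phi_f$ is a homomorphism of coalgebras, i.e. that it commutes with the comultiplication induced by the cooperad structure. Both composites — comultiply first and then apply $\phi_f$ on each factor, versus apply $\phi_f$ first and then comultiply — can be expanded as a sum over iterated total coproducts of $\gamma$ with $f$ inserted at the corollas of the resulting trees. The two expansions agree because the total coproduct $\nu$ is coassociative for the cooperad structure, so that a two-level decomposition may be computed in either order; matching the trees termwise, together with their Koszul signs, gives the equality. This bookkeeping — tracking how the multi-shuffle conditions and the signs behave under regrouping of vertices — is the main obstacle and the only genuinely nontrivial point.

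Finally I would verify that the two constructions are mutually inverse. The computation above already gives $\pi \circ \phi_f = f$, so $f \mapsto \phi_f$ is a section of $\phi \mapsto \pi \circ \phi$. For the reverse identity $\phi_{\pi \circ \phi} = \phi$, I would use that $\phi$ itself commutes with comultiplication: iterating the coalgebra-homomorphism relation expresses the value of $\phi$ on $\gamma(a_1,\ldots,a_n)$ as the total decomposition of $\gamma$ with $\pi\circ\phi$ applied to each corolla, which is precisely $\phi_{\pi\circ\phi}(\gamma(a_1,\ldots,a_n))$. Hence the correspondence $\phi \mapsto \pi\circ\phi$, with inverse $f \mapsto \phi_f$, is a bijection.
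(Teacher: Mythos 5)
Your proof is correct: it is the standard verification of the universal property of the cofree connected $\DD$-coalgebra (corestriction one way, $\DD(f)$ composed with the total coproduct the other way, with the coalgebra-morphism condition reduced to coassociativity of $\nu$ and injectivity of the correspondence reduced to the counit relation plus connectedness of $\DD$). The paper itself gives no proof of this statement --- it is quoted as Observation 4.1.7 of Fresse's Arolla article --- and your argument is essentially the one found there, so there is nothing to compare beyond noting that you have supplied the details the paper delegates to the reference.
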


\begin{subprop}[{\cite[Proposition 4.1.8]{3Arolla2008}}]
The homomorphism of cofree coalgebras $\phi_f : \DD(A) \to \DD(B)$ associated to a homomorphism $f : \DD(A) \to B$ defines a morphism between quasi-cofree coalgebras $(\DD(A), \partial_\alpha) \to (\DD(B), \partial_\beta)$ if and only if we have the identity
$$\delta(f) \left(\vcenter{
	\xymatrix@M=3pt@H=4pt@W=3pt@R=8pt@C=4pt{
	a_1 \ar@{-}[dr]\ar@{.}[rr] && a_n\ar@{-}[dl] \\
	&  \gamma\ar@{-}[d] &  \\
	&&}  }  \right) 
-
\sum_{\nu_2} \pm f \left(\vcenter{
\xymatrix@M=3pt@H=4pt@W=3pt@R=8pt@C=4pt{
 	&a_*\ar@{.}[rr]\ar@{-}[dr] & \save+<-31pt,-11pt>*{\alpha \Biggl[} +<57pt,0pt>*{\Biggl]}\restore & a_*\ar@{-}[dl] \\
a_*\ar@{-}[drr]\ar@{.}[r]& & \gamma''\ar@{-}[d]  &\ar@{.}[r]& a_*\ar@{-}[dll] \\
	&& \gamma'\ar@{-}[d] && \\
	&&&}}\right)
$$ $$ +
\sum_{\nu} \beta \left(\vcenter{
\xymatrix@M=3pt@H=4pt@W=3pt@R=8pt@C=4pt{
 	a_*\ar@{.}[rr]\ar@{-}[dr] & \save+<-31pt,-11pt>*{f \Biggl[} +<57pt,0pt>*{\Biggl]}\restore & a_*\ar@{-}[dl] && a_*\ar@{.}[rr]\ar@{-}[dr] & \save+<-31pt,-11pt>*{f \Biggl[} +<57pt,0pt>*{\Biggl]}\restore & a_*\ar@{-}[dl]\\
 & \gamma''_*\ar@{-}[drr]  &\ar@{.}[rr]&&& \gamma''_*\ar@{-}[dll] & \\
	&&& \gamma'\ar@{-}[d] && \\
	&&&&}}\right)=0$$
for every element  $\gamma(a_1, \ldots, a_n)$ in $\DD(A)$.\end{subprop}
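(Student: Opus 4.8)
The plan is to turn the assertion ``$\phi_f$ is a morphism of \emph{differential} graded coalgebras'' into the vanishing of a single corestriction, in exactly the spirit of the coderivation/corestriction correspondence recalled above. Since $\phi_f$ already commutes with the coalgebra structures by Proposition \ref{125}, the only remaining requirement is that $\phi_f$ be a chain map for the total differentials of the two quasi-cofree coalgebras. Equivalently, writing
$$\Psi := \partial_\beta \circ \phi_f - \phi_f \circ \partial_\alpha \colon \DD(A) \to \DD(B),$$
a map of degree $-1$, the condition is precisely $\Psi = 0$. Here $\partial_\alpha$ and $\partial_\beta$ denote the total differentials, combining the internal differentials of $A$, $B$ and of $\DD$ with the twisting terms governed by $\alpha$ and $\beta$.

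The key structural point is that $\Psi$ is a coderivation \emph{relative to} $\phi_f$. Indeed, $\partial_\alpha$ and $\partial_\beta$ are coderivations and $\phi_f$ is a morphism of coalgebras, so both composites $\partial_\beta\circ\phi_f$ and $\phi_f\circ\partial_\alpha$ are $\phi_f$-coderivations, and hence so is their difference. By the same uniqueness principle underlying Proposition \ref{125} and the coderivation/corestriction correspondence, a $\phi_f$-coderivation $\DD(A)\to\DD(B)$ is completely determined by its corestriction $\pi_B\circ\Psi\colon \DD(A)\to B$, where $\pi_B\colon\DD(B)\to B$ is the canonical projection. Therefore $\Psi=0$ if and only if $\pi_B\circ\Psi=0$, and the whole proposition reduces to computing this corestriction.

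It then remains to expand $\pi_B\circ\Psi = \pi_B\partial_\beta\phi_f - \pi_B\phi_f\partial_\alpha$. I would split each total differential into its internal part (induced by the differentials of $A$, $B$ and $\DD$) and its twisting part, whose corestrictions are $\alpha$ and $\beta$ respectively. Three facts then do the work: the corestriction of $\phi_f$ is $f$, so $\pi_B\circ\phi_f\circ X = f\circ X$ for any $X\colon\DD(A)\to\DD(A)$; the corestriction of the twisting part of $\partial_\beta$ is $\beta$; and the twisting part of $\partial_\alpha$ has corestriction $\alpha$ with $\alpha_{|A}=0$. Using the explicit formula for $\phi_f$ from Proposition \ref{125} (total coproduct $\nu$, with $f$ applied to the upper corollas) and the formula for $\partial_\alpha$ recalled above (quadratic coproduct $\nu_2$, with $\alpha$ applied to the upper corolla, the single-input term dropping out since $\alpha_{|A}=0$), the twisting contribution of $\pi_B\partial_\beta\phi_f$ is $\beta\circ\phi_f = \sum_\nu\beta(\dots)$, the twisting contribution of $-\pi_B\phi_f\partial_\alpha$ is $-\sum_{\nu_2} f(\dots\alpha[\dots]\dots)$, and the two internal-differential contributions assemble into $\delta(f)$, defined in analogy with $\delta(\alpha)$. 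Matching these against the asserted identity term by term, and keeping the sign conventions of \eqref{prop414}, finishes the argument.

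I expect the main obstacle to be twofold. The genuinely structural point is to establish, and apply correctly, the relative version of the coderivation/corestriction principle: one must verify that $\Psi$ is a $\phi_f$-coderivation and that such maps into the cofree connected coalgebra $\DD(B)$ over the cooperad $\DD$ are detected by their projection to $B$. The remainder is careful bookkeeping: tracking the Koszul signs through $\Psi$ (in particular the minus sign in front of the $\nu_2$-term of the identity), separating the internal-differential contributions into $\delta(f)$, and confirming that the single-input part of $\partial_\alpha$ contributes nothing thanks to $\alpha_{|A}=0$, so that only the quadratic term survives.
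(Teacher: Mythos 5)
Your argument is correct and is essentially the standard proof of this fact: the paper itself does not prove this proposition but merely recalls it from Fresse's cited work (Proposition 4.1.8 of \cite{3Arolla2008}), where the argument is exactly the reduction you describe. Your two key points --- that $\Psi=\partial\circ\phi_f-\phi_f\circ\partial$ is a coderivation relative to the coalgebra morphism $\phi_f$, hence vanishes if and only if its corestriction to $B$ vanishes, and that this corestriction expands term by term into $\delta(f)$, the $\nu_2$-term with $\alpha$, and the $\nu$-term with $\beta$ --- are precisely the content of the reference's proof, so nothing further is needed beyond the sign bookkeeping you already flag.
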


\subsection{The Barratt-Eccles operad and its action on cochains}\label{actionE}
Recall that an $E_\infty$-operad is a $\Sigma_*$-cofibrant replacement of the commutative operad.

The Barratt-Eccles operad $\EE$ is an example of $E_\infty$-operad, defined by the normalized chain complex $\EE = N_*(E\Sigma_\bullet)$, where $E\Sigma_n$ is the total space of the universal $\Sigma_n$-bundle in simplicial spaces. The chain complex $N_*(E\Sigma_n)$ is identified with the acyclic homogeneous bar construction of the symmetric group $\Sigma_n$, the module spanned in degree $t$ by the $(t+1)$-tuples of permutations $\underline w = (w_0, \ldots, w_t)$ together with the differential $\delta$ such that $\delta(\underline w) = \sum_i (-1)^i (w_0, \ldots, \widehat{w_i}, \ldots, w_t)$. We consider the left action of the symmetric group on this chain complex.

The composition product of $\EE$ is obtained using the composition product of permutations (which is just the insertion of a block). More precisely, for $\underline w = (w_0, \ldots, w_m) \in \EE(r)$ and $\underline w' = (w'_0, \ldots, w'_n) \in \EE(s)$, the composite $\underline w \circ_i \underline w' \in \EE(r+s-1)$ is defined by 
$$\underline w \circ_i \underline w' = \sum_{x_*, y_*} \pm (w_{x_0} \circ_i w'_{y_0}, \ldots, w_{x_{m+n}} \circ_i w'_{y_{m+n}})$$ where the sum ranges over the monotonic paths from $(0,0)$ to $(m,n)$ in $\NN \times \NN$.

The operad $\EE$ acts on $N^*(\Delta^1)$, according to the paper by Berger and Fresse \cite{3BF}. We denote this action by $\sigma$.
For our purposes, we simply recall the action of the component of degree $0$ of $\EE$. We have the equality of dg-modules $N^*(\Delta^1) = \KK. \underline 0^\# \oplus \KK. \underline 1^\# \oplus \KK. \underline{01}^\#$ where $\underline{0}^\#$, $\underline{1}^\#$ and $\underline{01}^\#$ denote the dual of the basis of non-degenerate simplices. The differential $\partial_N$ satisfies $\partial_N(\underline{01}^\#) = \underline{1}^\# - \underline{0}^\#$ and $\partial_N(\underline{0}^\#) = \partial_N(\underline{1}^\#)=0$. The $r$-th component in degree $0$ of $\EE$ is actually $\Sigma_r$, and the identity of $\Sigma_r$ acts on $N^*(\Delta^1)$ as follows:
\begin{itemize}
 \item $id . (\underline 0^\#,\ldots, \underline 0^\#, \underline {01}^\#, \underline 1^\#, \ldots, \underline 1^\#) = \underline {01}^\#$
 \item $id . (\underline 0^\#,\ldots, \underline 0^\#) = \underline 0^\#$
 \item $id . (\underline 1^\#,\ldots, \underline 1^\#) = \underline 1^\#$
 \item $id . (u_1, \ldots, u_r)=0$ otherwise.
\end{itemize}
The equivariance gives the action of the other permutations of $\Sigma_r$. We will not need the formula for the action of $\EE$ in higher degrees.

\subsection{The cylinder object of an algebra over an operad}\label{cylindre}

Let $\QQ$ be any cofibrant operad, for instance $\QQ= B^c(B(\PP\boxtimes\EE))$.
Let $B$ be a $\QQ$-algebra, with the structure given by $\beta$. We recall in this section the results we need from  \cite[Section 3.1]{3BF}.

The cylinder object of $B$ in the category of $\QQ$-algebras is $B \t N^*(\Delta^1)$. 

It is naturally endowed with the action $\beta \t \sigma$ of $\QQ \boxtimes \EE$:
$$(q \t \pi) (b_1 \t u_1, \ldots, b_r \t u_r) = q(b_1, \ldots, b_r) \t \pi (u_1, \ldots, u_r)$$
for $q \in \QQ, \pi \in \EE, (b_1, \ldots, b_r) \in B^r, (u_1, \ldots, u_r) \in N^*(\Delta^1)^r$. Fixing an operadic section $\rho : \QQ \to \QQ \boxtimes \EE$ of the augmentation $\QQ \boxtimes \EE \to \QQ$, we can see $B \t N^*(\Delta^1)$ as a $\QQ$-algebra. In Section \ref{section}, we will fix an explicit map $\rho$.

\subsection{Operadic $\Gamma$-cohomology}\label{rappelgamma}

In \cite{3Hof}, we have defined a generalization of Robinson's and Whitehouse's $\Gamma$-homology. We recall the definition here in the context of this paper.

Let $A$ and $B$ be $\PP$-algebras and $f: A \to B$ a morphism of $\PP$-algebras. 
The $\Gamma$-(co)homology of $A$ with coefficients in $B$ is defined by  $H_*(\Der_{\tilde \PP} (\tilde A, B))$ where $\tilde\PP$ is a $\Sigma_*$-cofibrant replacement of $\PP$ and $\tilde A$ a cofibrant replacement of $A$ as $\tilde\PP$-algebras. In this definition, the derivations are the $\tilde \PP$-derivations relatively to the morphism $\psi \circ f$, where $\psi$ denotes the morphism $\tilde A \we A$. An easy way to understand this definition is the following: the $\Gamma$-cohomology of a $\PP$-algebra $A$ is the usual Andr\'e-Quillen cohomology of $A$ seen as an algebra over a $\Sigma_*$-cofibrant replacement of $\PP$.


\vspace{1.5cm}

\section{Realizations of morphisms}
Suppose given 
\begin{itemize}
        \item an operad $\PP$ with the canonical operadic cofibrant replacement $\tilde\PP=B^c(B(\PP\boxtimes\EE))$,

	\item two algebras, $A$ and $B$, over $\tilde\PP$, 
	\item a $\PP$-algebra morphism $f_0$ :  $H_*A \to H_*B$ (where $H_*A$ and $H_*B$ have the structure induced in homology).
\end{itemize}
We want to understand the obstruction to the existence of a morphism $\phi: A \to B$ in the homotopy category of $\tilde\PP$-algebras such that $H_*\phi=f_0$.

\subsection{Outline of the study}\label{layout1}
We will proceed in the following way:

We first show in Section~\ref{restrict} that we can restrict our study to the case where the differentials of $A$ and $B$ are trivial, and we give some results concerning the structures induced in homology.
We consider the cooperad $\DD = B(\PP \boxtimes \EE)$, and the explicit cofibrant replacements of $A$ and $B$ from Proposition~\ref{rempcofib}. 
In Section~\ref{construct1}, we want to construct a $\DD$-coalgebra map $\phi_f : (\DD(A), \partial_{\alpha}) \to (\DD(B), \partial_{\beta})$ extending $f_0$ (it will lead to the expected morphism in the homotopy category). We introduce a filtration on $\DD(A)$, to proceed by induction. 
We notice that the obstruction to the construction of $\phi_f$ lies in a certain cohomology group which can be identified with the first group of $\Gamma$-cohomology of $H_*A$ with coefficients in $H_*B$.
If $\phi _{f}$ can be constructed, then (as the construction $R_{\tilde\PP}$ is functorial, see Section~\ref{rempcofib}) we obtain $\tilde \PP \phi _{f}:=R_{\tilde\PP}(\phi_f)$ which fits a diagram
$$\xymatrix@M=3pt@H=4pt@W=3pt@R=18pt@C=34pt{
(\tilde\PP(\DD(A), \partial_\alpha),\partial) \ar[r]^{\tilde \PP \phi _{f}}\ar[d]^\sim  & (\tilde \PP(\DD(B), \partial_\alpha),\partial) \ar[d]^\sim \\
A & B
}.$$
and thus we obtain a morphism from $A$ to $B$ in the homotopy category of $\tilde\PP$-algebras.

\subsection{Restriction of the hypotheses}\label{restrict}
We show here that we can reduce our study to the case where the differentials of $A$ and $B$ are trivial. 

First, recall the following result concerning the transfer of structures:
\begin{fact}\label{transpstruct}
Let $f : A \we B$ be a weak equivalence of dg-modules. Suppose that $B$ has an action of a cofibrant operad $\QQ$.

Then $A$ inherits the structure of a $\QQ$-algebra such that 
\begin{enumerate}
 \item $A \weinv \cdot \we B$ where the morphisms are weak equivalences of $\QQ$-algebras,
 \item $H_* (A \weinv \cdot \we B)=H_*f$.
\end{enumerate}
\end{fact}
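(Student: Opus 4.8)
The plan is to exploit the model category structure on $\QQ$-algebras together with the fact that $\QQ$ is cofibrant, so that $A$ and $B$ (being arbitrary dg-modules with the transferred and given structures) are fibrant and the relevant lifting problems can be solved. The statement is essentially the homotopy-invariance of algebra structures along a weak equivalence of underlying dg-modules, and the standard way to obtain it is via the cobar-bar machinery already set up in the excerpt.

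First I would observe that since $B$ is a $\QQ$-algebra and $\QQ$ is cofibrant, we may as well assume $\QQ = \tilde\PP = B^c(\DD)$ with $\DD = B(\PP \boxtimes \EE)$, because working over any cofibrant replacement is equivalent (as noted after the Fact on $B^c(B(\PP\boxtimes\EE))$). By Proposition~\ref{rempcofib}, the $\QQ$-algebra structure on $B$ is encoded by a map $\beta : \DD(B) \to B$ satisfying Equation~\eqref{prop414}, equivalently a quasi-cofree $\DD$-coalgebra $(\DD(B), \partial_\beta)$. The goal is to produce an analogous $\alpha : \DD(A) \to A$ making $A$ a $\QQ$-algebra, and to do so compatibly with the given weak equivalence $f$.

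The key step is the homotopy transfer of the coderivation. Since $\KK$ is a field, the underlying dg-module map $f : A \we B$ admits a homotopy inverse $g : B \to A$ together with a homotopy $h$ on $B$ with $fg - \mathrm{id}_B = \delta(h)$; one can even arrange a contraction (the deformation retract data), as every quasi-isomorphism of dg-$\KK$-modules splits up to homotopy. Applying $\DD(-)$ and transporting the twisting coderivation $\partial_\beta$ along this homotopy equivalence $\DD(A) \rightleftarrows \DD(B)$ yields a new coderivation $\partial_\alpha$ on $\DD(A)$; by Proposition~\ref{prop414}'s characterization, the corresponding corestriction $\alpha : \DD(A) \to A$ automatically satisfies Equation~\eqref{prop414} with $\alpha_{|A} = 0$, hence defines a $\QQ$-algebra structure on $A$. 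The transported $\DD$-coalgebra morphisms then give quasi-cofree coalgebra maps between $(\DD(A), \partial_\alpha)$ and $(\DD(B), \partial_\beta)$, which by Proposition~\ref{rempcofib} and functoriality of $R_{\tilde\PP}$ descend to a zig-zag of weak equivalences $A \weinv \cdot \we B$ of $\QQ$-algebras, establishing part (1).

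For part (2), I would track the homology. Since each morphism in the zig-zag is a weak equivalence of the underlying dg-modules, applying $H_*$ gives isomorphisms, and the composite $H_*(A \weinv \cdot \we B)$ is, by construction, induced by the underlying dg-module maps built from $f$, $g$, $h$; on homology the homotopy $h$ contributes nothing, so the composite equals $H_*f$. The main obstacle is the homotopy transfer itself: one must verify that the transported data genuinely satisfies the coalgebra relation \eqref{prop414}, i.e.\ that the transported coderivation squares to zero and is a coderivation. This is the technical heart, and it is exactly where the cofibrancy of $\QQ$ (equivalently the freeness of $B^c(\DD)$ and the cofree structure of $\DD(-)$) is used, ensuring the lifting/transfer exists without characteristic hypotheses. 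Rather than grinding through the tree-wise formulas, I would cite the homotopy transfer theorem in the operadic/cobar-bar setting (available in \cite{3Arolla2008} in this generality) to supply the transported structure and the comparison zig-zag.
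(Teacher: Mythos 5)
Your outline is mathematically reasonable, but you should know that the paper does not prove Fact \ref{transpstruct} at all: it is stated as a known result, attributed to Kadeishvili \cite{3Kad} in the $A_\infty$ setting and, in the present generality, to Fresse \cite[Theorem A]{3Fgeorg}, with the added remark that assertion (2) is extracted from the \emph{proof} of that theorem rather than from its statement. So your route --- choosing contraction data for $f$ over the field $\KK$, transferring the twisting coderivation $\partial_\beta$ on $\DD(B)$ to a coderivation $\partial_\alpha$ on $\DD(A)$ by the perturbation lemma, and reading off a zig-zag $A \weinv R_{\tilde\PP}(\DD(A),\partial_\alpha) \we B$ whose linear part is $f$ --- is a genuine proof strategy, well adapted to the coalgebra formalism the paper sets up, whereas the paper simply outsources the statement; this is a fair trade, since your version makes assertion (2) visible by construction instead of by inspection of someone else's proof. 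Two soft spots, though. First, the opening reduction ``we may as well assume $\QQ=B^c(\DD)$'' is not automatic for an arbitrary cofibrant operad $\QQ$ and would need justification; Fresse's theorem (a model-categorical lifting argument) applies to any cofibrant operad directly, which is one reason the paper cites it in that form, and for the paper's purposes only $\QQ=\tilde\PP$ is ever used, so you could simply restrict the statement. Second, the assertion that the transferred corestriction ``automatically'' satisfies Equation \eqref{prop414} \emph{is} the homotopy transfer theorem; since you defer exactly that verification to \cite{3Arolla2008}, your argument is in the end no more self-contained than the paper's citation of \cite{3Fgeorg} --- which is acceptable for a ``Fact,'' but should be stated as a citation rather than presented as an automatic consequence.
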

This result in the $A_\infty$ context was already in Kadeishvili's work \cite{3Kad}. 
In our context, we refer to the result stated by Fresse \cite[Theorem A]{3Fgeorg}. The second assertion is not made explicit in the theorem but follows immediately from the proof.

Let $H=H_*A$ be the homology of a $\QQ$-algebra $A$. The graded module $H$ can be seen as a dg-module with a trivial differential, weakly equivalent to $A$ as dg-modules. 
We fix a splitting $A_* = Z_*A \oplus B'_{*-1}A$, where $Z_*A$ denote the cycles of $A$ (and where $B'_{*-1}A$ is isomorphic to the boundaries $B_{*-1}A$). This yields a map $A \to Z_*A$, which induces a map $A \to H$ by composition with the projection $Z_*A \to H$.
As we are working over a field, we can fix a section of dg-modules $s_A : H_*A \to Z_*A$ of the projection $Z_*A \twoheadrightarrow H_*A$, and thus a map $H \we A$.

The fact \ref{transpstruct} implies that $H$ inherits a structure of a $\QQ$-algebra such that $H \weinv \cdot \we A$, where the morphisms are weak equivalences of $\QQ$-algebras. This action of $\QQ$ on $H$ induces in homology an action of $H_*\QQ$ on $H=H_*H$.

On the other hand, as $H$ is the homology of the $\QQ$-algebra $A$, it inherits the structure of an algebra over $H_*\QQ$.

\begin{sublemm}
The two actions of $H_*\QQ$ on $H$ defined above coincide.
\end{sublemm}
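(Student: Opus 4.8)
The plan is to unravel both $H_*\QQ$-actions on $H$ explicitly and compare them on arbitrary operations and elements, exploiting that we work over a field so that everything splits. Recall the setup: we have the section $s_A : H \we A$ (into cycles) coming from the splitting $A_* = Z_*A \oplus B'_{*-1}A$, and the induced projection $p_A : A \to H$ with $p_A \circ s_A = \mathrm{id}_H$. The transferred $\QQ$-action on $H$ is obtained from Fact~\ref{transpstruct} applied to the weak equivalence $s_A$; the other action is the one $H = H_*A$ carries as the homology of the $\QQ$-algebra $A$, through the canonical $H_*\QQ$-action. The first step is to make the transferred action concrete enough to read off what it does in each arity.

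First I would recall how the transfer of structure of Fact~\ref{transpstruct} is built (via the homotopy transfer theorem / Kadeishvili's formulas, as in Fresse \cite{3Fgeorg}): the transferred operation of an element $q \in \QQ(r)$ on $(h_1,\dots,h_r) \in H^r$ is given by a sum of trees whose leaves are the sections $s_A(h_i)$, whose vertices are labelled by the $\QQ$-structure maps of $A$, whose internal edges are decorated by the chosen homotopy $h$ (the contraction $A \to A$ arising from the splitting), and whose root is the projection $p_A$. The key observation is that the homotopy $h$ vanishes on cycles in the chosen splitting, and $p_A$ restricted to cycles is exactly the passage to homology. So I would argue that \emph{all} the correction trees with at least one internal edge contribute zero: since each $s_A(h_i)$ is a cycle and the structure maps of $A$ preserve cycles, the homotopy $h$ is being applied to a cycle and hence gives $0$. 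Only the single-vertex tree survives.

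The core computation is therefore that the transferred action of $q \in \QQ(r)$ reduces to
$$
q \cdot (h_1, \dots, h_r) \;=\; p_A\bigl(\,q_A(s_A h_1, \dots, s_A h_r)\,\bigr),
$$
where $q_A$ denotes the action of $q$ on $A$. On the other hand, the action of the homology class $[q] \in H_*\QQ$ on $H = H_*A$, by definition of the induced $H_*\QQ$-structure, sends $([h_1],\dots,[h_r])$ to the homology class $[\,q_A(z_1,\dots,z_r)\,]$ where $z_i$ is any cycle representing $h_i$. Choosing the representatives $z_i = s_A(h_i)$ (legitimate since $s_A$ lands in cycles and $p_A s_A = \mathrm{id}$), this class is exactly $p_A(q_A(s_A h_1, \dots, s_A h_r))$ under the identification $H_*A \cong H$. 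Thus the two formulas agree term by term, and since both actions are $\KK$-linear and $\Sigma_r$-equivariant it suffices to check them on such generators, which we have just done.

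The main obstacle is the first step: justifying rigorously that every higher correction term in the transferred structure vanishes. This requires knowing the precise shape of the transfer formula from \cite{3Fgeorg} and, crucially, that the contracting homotopy associated to the splitting $A_* = Z_*A \oplus B'_{*-1}A$ annihilates cycles, so that feeding in the cycles $s_A(h_i)$ kills any tree with an internal edge. Once that vanishing is in hand, the identification of the two actions is the essentially formal computation above; no delicate sign or coherence issues should arise beyond routine bookkeeping, precisely because we have arranged the sections to land in cycles.
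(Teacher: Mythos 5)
There is a genuine gap, and it sits exactly where you flagged the ``main obstacle'': the claim that the contracting homotopy $h$ associated to the splitting kills cycles, so that every correction tree with an internal edge vanishes. No such homotopy exists. Any contraction for the splitting $A_*=Z_*A\oplus B'_{*-1}A$, $Z_*A = s_A(H)\oplus B_*A$ must satisfy $dh+hd=\mathrm{id}-s_Ap_A$, and restricting this identity to the boundaries $B_*A\subset Z_*A$ (where $hd=0$ and $s_Ap_A=0$) forces $dh=\mathrm{id}$ there; so $h$ is a section of $d$ on boundaries and cannot vanish on them. The homotopy vanishes on the harmonic part $s_A(H)$ and on $B'A$, not on all cycles. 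Two further links in your chain also break: the individual vertex labels in a tree decomposition of a cycle $q\in \QQ(r)$ need not themselves be cycles of $\QQ$, so the intermediate outputs need not be cycles of $A$; and even when they are, a cycle generically has a nonzero boundary component on which $h$ is nonzero. Massey products are precisely nonvanishing correction trees evaluated on harmonic representatives of cycles, so the termwise vanishing you assert is false in the simplest examples. What is true is that the \emph{sum} of the higher terms does not affect the induced $H_*\QQ$-action, but that requires an argument you have not supplied.

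The paper's proof avoids all of this: it applies $H_*$ to the zig-zag of $\QQ$-algebra weak equivalences $H \weinv \cdot \we A$ furnished by Fact~\ref{transpstruct}, obtaining a zig-zag of isomorphisms of $H_*\QQ$-algebras, and then uses clause (2) of that Fact --- that the zig-zag induces $H_*f$ in homology, i.e.\ the identity under the canonical identification --- to conclude that the identity map intertwines the two actions. If you insist on an explicit computation, the honest repair is not ``$h$ kills cycles'' but a good choice of representative: a class in $H_*\tilde\PP$ can be represented by a weight-one cycle coming from the section $\PP\hookrightarrow B^c(B(\PP\boxtimes\EE))$, and for weight-one elements the reduced cooperad coproduct produces no nontrivial decompositions, so the transfer formula degenerates to the single-vertex tree $p_A\bigl(q_A(s_Ah_1,\dots,s_Ah_r)\bigr)$; your final comparison of that term with the canonical induced action is correct. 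As written, however, the proof rests on a property of $h$ that no contraction can have.
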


\begin{proof}
The zig-zag of $\QQ$-algebras $H \weinv \cdot \we A$ induces in homology the zig-zag of $H_*\QQ$-algebras  $H \stackrel{\simeq}{\leftarrow} H_*(\cdot) \stackrel{\simeq}{\rightarrow} H_*A$. By the second point of the Fact \ref{transpstruct}, $H$ (with the first action) and $H_*A$ (with the second action) are equal as $H_*\QQ$-algebras.
\end{proof}


Let $B$ be another $\QQ$-algebra and $K=H_*B$ its homology. Let $\tilde H$ and $\tilde K$ be cofibrant replacements of $H$ and $K$ in the category of $\QQ$-algebras.
We use the following identities
$$\Hom_{\Ho\QQ-alg}(A,B)=\Hom_{\Ho\QQ-alg}(H,K)=[\tilde H,\tilde K]_{\QQ-alg}$$
where the notation $[-,-]$ refers to the homotopy classes, to restrict our study to the case of trivial differentials. 

\vspace{1cm}

Let $\alpha$ denote the action of the operad $\QQ$ on the dg-module $A$. We now make explicit the action $\alpha_1$ of $H_*\QQ$ on $H_*A$.

Let $Z_*\QQ$ denote the cycles of $\QQ$. As before, we can consider a section of the homology $s_\QQ : H_*\QQ \to Z_*\QQ$.

\begin{observ}
The action $\alpha_1$ can be determined by the commutativity of the following diagram:
$$\xymatrix@M=3pt@H=18pt@W=18pt@R=18pt@C=58pt{
H_*\QQ(r) \t H_*A^{\t r}\ar[r]^{\alpha_1} \ar[d]^{s_\QQ \t (s_A)^{\t r}} & H_*A \\
Z_*\QQ(r) \t Z_*A^{\t r}\ar@{.>}[r]^{\alpha} \ar[d]  & Z_*A \ar[u]\ar[d] \\
\QQ(r) \t A^{\t r}\ar[r]^{\alpha}  & A.  \\
},$$
where the dotted map is the restriction. The image of this restriction is included in the cycles of $A$.
\end{observ}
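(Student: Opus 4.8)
The plan is to unwind what the induced action $\alpha_1$ actually is, and then verify that the section-based recipe in the diagram computes it. By definition $\alpha_1$ is the map obtained by passing the chain map $\alpha \colon \QQ(r) \t A^{\t r} \to A$ to homology and precomposing with the Künneth isomorphism $H_*\QQ(r) \t (H_*A)^{\t r} \we H_*(\QQ(r) \t A^{\t r})$, which is available precisely because $\KK$ is a field. So the whole statement reduces to producing an explicit cycle representative of a Künneth class and tracking it through $\alpha$.

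First I would check that the dotted arrow is well defined, i.e. that $\alpha$ restricts to a map $Z_*\QQ(r) \t Z_*A^{\t r} \to Z_*A$. This is immediate from $\alpha$ being a morphism of dg-modules: if $q \in Z_*\QQ(r)$ and $a_1, \ldots, a_r \in Z_*A$ are cycles, then by the Leibniz rule $q \t a_1 \t \cdots \t a_r$ is a cycle in $\QQ(r) \t A^{\t r}$, hence its image under the chain map $\alpha$ is a cycle in $A$. This is exactly the last sentence of the statement, and it also shows that the bottom square of the diagram commutes trivially, the dotted map being nothing but the restriction of $\alpha$.

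Next, the sections $s_\QQ$ and $s_A$ send a homology class to a cycle representing it, so $s_\QQ(q) \t s_A(a_1) \t \cdots \t s_A(a_r)$ is a cycle (again by Leibniz) representing the Künneth class of $q \t a_1 \t \cdots \t a_r$. Applying the restricted $\alpha$ and then projecting the resulting cycle of $A$ to its homology class therefore computes precisely $H_*\alpha$ on this class, namely $\alpha_1(q \t a_1 \t \cdots \t a_r)$ by definition of $\alpha_1$. Tracing the diagram down the left column via the sections, across the middle via the restricted $\alpha$, and up the right column via the projection $Z_*A \twoheadrightarrow H_*A$ reproduces exactly this computation, which is the asserted commutativity.

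The only point requiring a genuine argument is that this recipe is independent of the chosen sections, so that the diagram really does \emph{determine} $\alpha_1$. Changing a representative $s_A(a_i)$ to another cycle in the same class alters it by a boundary $d x_i$; since all other tensor factors are cycles, $s_\QQ(q) \t \cdots \t d x_i \t \cdots = \pm d(s_\QQ(q) \t \cdots \t x_i \t \cdots)$ is a boundary, and likewise for a change in $s_\QQ(q)$. As $\alpha$ is a chain map it carries boundaries to boundaries, so the homology class of the output is unchanged. I expect this independence-of-representatives verification to be the only substantive step; everything else is a direct reading of the Künneth theorem over a field together with the chain-map property of $\alpha$.
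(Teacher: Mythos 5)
Your argument is correct, and it supplies exactly the routine verification that the paper omits: this statement appears only as an unproved Observation, justified implicitly by the Künneth isomorphism over a field, the fact that the chain map $\alpha$ preserves cycles and boundaries, and the independence of the construction from the choice of cycle representatives. Your three steps (well-definedness of the dotted restriction, commutativity via the sections, and independence of representatives) are precisely the intended reasoning, so there is nothing to add or correct.
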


We now consider the case where $A$ is an algebra over $\QQ=\tilde\PP:=B^c(B(\PP\boxtimes\EE))$, where $\PP$ is a graded operad.
We use the particular section $\PP \hookrightarrow B^c(B(\PP\boxtimes\EE))$ given by the composite of the inclusion $\PP \to \PP \boxtimes \EE$ (sending $p \in \PP(r)$ to $p \t id_{\Sigma_r}$), with the obvious inclusions $\PP \boxtimes \EE$ to $B(\PP \boxtimes \EE)$ and $B(\PP \boxtimes \EE) \to B^c(B(\PP\boxtimes\EE))$. The above paragraphs give an action of $\PP$ on $H_*A$. If $\delta_A=0$, then we identify $A$ and $H_*A$, and thus we obtain the action $\alpha_1$ of $\PP$ on $A$.

\subsection{Construction of the morphism of coalgebras}\label{construct1}

We can now study our problem. We are given
\begin{itemize}
        \item a differential graded operad $\PP$ such that $\delta_\PP$=0,
	\item two algebras, $A$ and $B$, over $\tilde\PP=B^c(B(\PP\boxtimes\EE))$, with actions denoted by $\alpha$ and $\beta$, with trivial differentials,
	\item a $\PP$-algebra morphism $f_0$ :  $(H_*A, \alpha_1) \to (H_*B,\beta_1)$.
\end{itemize}
In this section, we do not distinguish between $A$ (resp. $B$) and $H_*A$  (resp. $H_*B$) as they are equal as dg-modules. We specify the structure ($\alpha$ or $\alpha_1$, $\beta$ or $\beta_1$) when we consider them as algebras over $\tilde\PP$ or $\PP$.

We want to define a morphism $\phi_{f}$ of $\DD$-coalgebras from $(\DD(A), \partial_{\alpha})$ to $(\DD(B), \partial_{\beta})$ such that the first component for a certain graduation is $f_0$. Recall from Section \ref{layout1} that such a morphism $\phi_f$ will induce  a morphism from $A$ to $B$ in the homotopy category. The morphism $\phi_f :(\DD(A), \partial_{\alpha}) \to (\DD(B), \partial_{\beta})$ will be the morphism induced by $f:\DD(A) \to B$, as defined in Proposition \ref{125}.

We use the graduation of $\DD=B(\PP \boxtimes \EE)$ given by the sum of the bar weight and the degree in $\EE$. This graduation of $\DD$ induces a splitting $\DD(A) =\bigoplus_d \DD_{[d]}(A)$ (we do not take into account any degree of $A$ or weight in $A$). The quadratic coproduct $\nu_2$ on $\DD$ sends $\gamma \in \DD_{[d+1]}$ to composites 
$$\vcenter{
\xymatrix@M=3pt@H=4pt@W=3pt@R=8pt@C=4pt{
 	&&\ast\ar@{.}[rr]\ar@{-}[dr] & & \ast\ar@{-}[dl] \\
&\ast\ar@{-}[dr]\ar@{.}[rr]& & \gamma''\ar@{-}[dl] \ar@{.}[r] & \ast\ar@{-}[dll] & \\
	&& \gamma'\ar@{-}[d] && \\
	&&&&}}$$
such that $\gamma' \in \DD_{[p]}$, $\gamma'' \in \DD_{[q]}$ and $p+q=d+1$.

We want to construct the map $f$ by induction on the degree.
We notice that in degree zero, $\DD_{[0]}(A)$ is reduced to $A$ and thus we define ${f}_{[0]} = f_0$ (remember we want $\phi_f$ to realize $f_0$).

The morphism $\phi_f$ must fit the following commutative diagram:

$$\xymatrix@M=3pt@H=18pt@W=8pt@R=18pt@C=38pt{
\DD(A)\ar[r]^{\phi_{f}} \ar[d]_{\partial_{\alpha}+\partial_\DD} & \DD(B)\ar[d]_{\partial_{\beta}+\partial_\DD}\ar[ddr]^{\beta} & \\
\DD(A)\ar[r]^{\phi_{f}}\ar[drr]_{f}  & \DD(B)\ar[dr]|{proj} & \\
& & B.
}$$
The triangle on the right obviously commutes. The commutativity of the triangle on the left defines $f$, the restriction of $\phi_f$ at the target. The commutativity of the exterior diagram is equivalent to the commutativity of the inner square.

The commutativity of this diagram is equivalent to the equation:
\begin{equation}\label{eq1}
f \circ (\partial_\DD + \partial_{\alpha}) = \beta \circ \phi_{f}. 
\end{equation}

We now suppose that $f$ is defined for degrees smaller than $d$ and we consider an element $\gamma (a_1, \ldots, a_n)$ where $\gamma$ lies in $\DD_{[d+1]}$. For this element, Equation \eqref{eq1} is equivalent to 
$$f \left(\vcenter{
	\xymatrix@M=3pt@H=4pt@W=3pt@R=8pt@C=4pt{
	a_1 \ar@{-}[dr]\ar@{.}[rr] && a_n\ar@{-}[dl] \\
	&  \partial_{\DD}\gamma\ar@{-}[d] &  \\
	&&}  }  \right) 
+
\sum_{\nu_2} \sum_{k=1}^{d} f \left(\vcenter{
\xymatrix@M=3pt@H=4pt@W=3pt@R=8pt@C=4pt{
 	&a_*\ar@{.}[rr]\ar@{-}[dr] & \save+<-33pt,-11pt>*{\alpha \Biggl[} +<61pt,0pt>*{\Biggl]}\restore & a_*\ar@{-}[dl] \\
a_*\ar@{-}[drr]\ar@{.}[r]& & \gamma''_{[k]}\ar@{-}[d]  &\ar@{.}[r]& a_*\ar@{-}[dll] \\
	&& \gamma'\ar@{-}[d] && \\
	&&&}}\right)
$$ $$ =
\sum_{\nu} \sum_{k=1}^{d+1} \beta \left(\vcenter{
\xymatrix@M=3pt@H=4pt@W=3pt@R=8pt@C=4pt{
 	a_*\ar@{.}[rr]\ar@{-}[dr] & \save+<-31pt,-11pt>*{f \Biggl[} +<57pt,0pt>*{\Biggl]}\restore & a_*\ar@{-}[dl] && a_*\ar@{.}[rr]\ar@{-}[dr] & \save+<-31pt,-11pt>*{f \Biggl[} +<57pt,0pt>*{\Biggl]}\restore & a_*\ar@{-}[dl]\\
 & \gamma''_*\ar@{-}[drr]  &\ar@{.}[rr]&&& \gamma''_*\ar@{-}[dll] & \\
	&&& \gamma'_{[k]}\ar@{-}[d] && \\
	&&&&}}\right)$$
where $\gamma'_{[k]}$ and $\gamma''_{[k]}$ denote elements in $\DD_{[k]}$.

Specifying the degrees of $f$ and taking the terms for $k=1$ out of the sums, we get:
$$f_{[d]} \left(\vcenter{
	\xymatrix@M=3pt@H=4pt@W=3pt@R=8pt@C=4pt{
	a_1 \ar@{-}[dr]\ar@{.}[rr] && a_n\ar@{-}[dl] \\
	&  \partial_{\DD}\gamma\ar@{-}[d] &  \\
	&&}  }  \right) 
+
\sum_{\nu_2} f_{[d]} \left(\vcenter{
\xymatrix@M=3pt@H=4pt@W=3pt@R=8pt@C=4pt{
 	&a_*\ar@{.}[rr]\ar@{-}[dr] & \save+<-35pt,-11pt>*{\alpha_0 \Biggl[} +<63pt,0pt>*{\Biggl]}\restore & a_*\ar@{-}[dl] \\
a_*\ar@{-}[drr]\ar@{.}[r]& & \gamma''_{[1]}\ar@{-}[d]  &\ar@{.}[r]& a_*\ar@{-}[dll] \\
	&& \gamma'\ar@{-}[d] && \\
	&&&}}\right)
$$ $$
+
\sum_{\nu_2} \sum_{k=2}^{d} f_{[d+1-k]} \left(\vcenter{
\xymatrix@M=3pt@H=4pt@W=3pt@R=8pt@C=4pt{
 	&a_*\ar@{.}[rr]\ar@{-}[dr] & \save+<-33pt,-11pt>*{\alpha \Biggl[} +<61pt,0pt>*{\Biggl]}\restore & a_*\ar@{-}[dl] \\
a_*\ar@{-}[drr]\ar@{.}[r]& & \gamma''_{[k]}\ar@{-}[d]  &\ar@{.}[r]& a_*\ar@{-}[dll] \\
	&& \gamma'\ar@{-}[d] && \\
	&&&}}\right)
$$ $$ =
\beta_1\left(\vcenter{
\xymatrix@M=3pt@H=4pt@W=3pt@R=8pt@C=4pt{
 	&a_*\ar@{.}[rr]\ar@{-}[dr] & \save+<-37pt,-11pt>*{f_{[d]} \Biggl[} +<65pt,0pt>*{\Biggl]}\restore & a_*\ar@{-}[dl] \\
f_0 a_*\ar@{-}[drr]\ar@{.}[r]& & \gamma''_{[d]}\ar@{-}[d]  &\ar@{.}[r]& f_0 a_*\ar@{-}[dll] \\
	&& \gamma'_{[1]}\ar@{-}[d] && \\
	&&&}}\right)
+
\sum_{\nu} \sum_{k=2}^{d+1} \beta \left(\vcenter{
\xymatrix@M=3pt@H=4pt@W=3pt@R=8pt@C=4pt{
 	a_*\ar@{.}[rr]\ar@{-}[dr] & \save+<-31pt,-11pt>*{f \Biggl[} +<57pt,0pt>*{\Biggl]}\restore & a_*\ar@{-}[dl] && a_*\ar@{.}[rr]\ar@{-}[dr] & \save+<-31pt,-11pt>*{f \Biggl[} +<57pt,0pt>*{\Biggl]}\restore & a_*\ar@{-}[dl]\\
 & \gamma''_*\ar@{-}[drr]  &\ar@{.}[rr]&&& \gamma''_*\ar@{-}[dll] & \\
	&&& \gamma'_{[k]}\ar@{-}[d] && \\
	&&&&}}\right).$$

The last sum of the left hand side and the last sum of the right hand side involve $f$ in degrees smaller than $d$, while the three other terms involve $f$ only in degree exactly $d$. The second and fourth terms involve respectively $\alpha_0$ and $\beta_0$, as only the restricted structure matters for elements in degree $0$. 

Thus we write the above equation in the following form: 
$$f_{[d]} \left(\vcenter{
	\xymatrix@M=3pt@H=4pt@W=3pt@R=8pt@C=4pt{
	a_1 \ar@{-}[dr]\ar@{.}[rr] && a_n\ar@{-}[dl] \\
	&  \partial_{\DD}\gamma\ar@{-}[d] &  \\
	&&}  }  \right) 
+
\sum_{\nu_2} f_{[d]} \left(\vcenter{
\xymatrix@M=3pt@H=4pt@W=3pt@R=8pt@C=4pt{
 	&a_*\ar@{.}[rr]\ar@{-}[dr] & \save+<-35pt,-11pt>*{\alpha_0 \Biggl[} +<63pt,0pt>*{\Biggl]}\restore & a_*\ar@{-}[dl] \\
a_*\ar@{-}[drr]\ar@{.}[r]& & \gamma''_{[1]}\ar@{-}[d]  &\ar@{.}[r]& a_*\ar@{-}[dll] \\
	&& \gamma'\ar@{-}[d] && \\
	&&&}}\right)
$$ $$ -
\sum_{\nu} \beta_1\left(\vcenter{
\xymatrix@M=3pt@H=4pt@W=3pt@R=8pt@C=4pt{
 	&a_*\ar@{.}[rr]\ar@{-}[dr] & \save+<-37pt,-11pt>*{f_{[d]} \Biggl[} +<65pt,0pt>*{\Biggl]}\restore & a_*\ar@{-}[dl] \\
f_0 a_*\ar@{-}[drr]\ar@{.}[r]& & \gamma''_{[d]}\ar@{-}[d]  &\ar@{.}[r]& f_0 a_*\ar@{-}[dll] \\
	&& \gamma'_{[1]}\ar@{-}[d] && \\
	&&&}}\right)
$$ $$
=-
\sum_{\nu_2} \sum_{k=2}^{d} f_{[d+1-k]} \left(\vcenter{
\xymatrix@M=3pt@H=4pt@W=3pt@R=8pt@C=4pt{
 	&a_*\ar@{.}[rr]\ar@{-}[dr] & \save+<-33pt,-11pt>*{\alpha \Biggl[} +<61pt,0pt>*{\Biggl]}\restore & a_*\ar@{-}[dl] \\
a_*\ar@{-}[drr]\ar@{.}[r]& & \gamma''_{[k]}\ar@{-}[d]  &\ar@{.}[r]& a_*\ar@{-}[dll] \\
	&& \gamma'\ar@{-}[d] && \\
	&&&}}\right)
$$ $$+
\sum_{\nu} \sum_{k=2}^{d+1} \beta \left(\vcenter{
\xymatrix@M=3pt@H=4pt@W=3pt@R=8pt@C=4pt{
 	a_*\ar@{.}[rr]\ar@{-}[dr] & \save+<-31pt,-11pt>*{f \Biggl[} +<57pt,0pt>*{\Biggl]}\restore & a_*\ar@{-}[dl] && a_*\ar@{.}[rr]\ar@{-}[dr] & \save+<-31pt,-11pt>*{f \Biggl[} +<57pt,0pt>*{\Biggl]}\restore & a_*\ar@{-}[dl]\\
 & \gamma''_*\ar@{-}[drr]  &\ar@{.}[rr]&&& \gamma''_*\ar@{-}[dll] & \\
	&&& \gamma'_{[k]}\ar@{-}[d] && \\
	&&&&}}\right)$$
with $f$ in degree $d$ grouped in the left hand side and $f$ in degrees smaller than $d$ grouped in the right hand side.

According to our induction hypothesis, the right hand side is known. The left hand side can be identified with $\partial(f_{[d]}(\gamma))$ where $\partial$ is the differential in $\Der_{\tilde\PP}(\tilde\PP(\DD (A), \partial_{\alpha_1}), (B,\beta_1))$
and $\gamma \in \DD$ is identified with $1_{\tilde\PP} \circ \gamma \in \tilde\PP\DD$. Note that these derivations take into account only the restricted structures $\alpha_1$ and $\beta_1$, and not the full structures $\alpha$ and $\beta$.

We have proved
\begin{theo}
If the cohomology group $H^1\Der_{\tilde\PP}(\tilde\PP(\DD (A), \partial_{\alpha_1}), (B,\beta_1))$ is equal to $0$, we can construct a map $f_{[d]}$ (i.e. continue our induction), and hence a map $\phi_f$ answering the initial problem. 
\end{theo}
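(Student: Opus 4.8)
The plan is to read the long displayed equation just above the statement as a single equation in the cochain complex $\Der_{\tilde\PP}(\tilde\PP(\DD(A),\partial_{\alpha_1}),(B,\beta_1))$, with unknown $f_{[d]}$. By the identification already recorded, the three terms on its left-hand side assemble into the derivation differential $\partial(f_{[d]})$ evaluated on the generator $\gamma\in\DD_{[d+1]}$, while the two sums on the right-hand side involve only the components $f_{[d']}$ with $d'<d$ together with the full actions $\alpha$ and $\beta$, so they are completely determined by the induction hypothesis. Writing $c$ for this right-hand side, continuing the induction is exactly the problem of solving $\partial(f_{[d]})=c$, that is, of realizing $c$ as a coboundary.

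The technical core is to check that $c$ is a cocycle, so that its class is defined in $H^1$. I would not expand $c$ by brute force but deduce $\partial c=0$ from the coherence already built into the data: the differential $\partial_\DD+\partial_\alpha$ on $\DD(A)$ squares to zero (Equation \eqref{prop414}), because $(\DD(A),\partial_\alpha)$ is a differential graded quasi-cofree coalgebra; the same holds for $(\DD(B),\partial_\beta)$ and is compatible with the $B^c(\DD)$-action $\beta$; and Equation \eqref{eq1} is already satisfied in all degrees $<d$. Applying $\partial$ to $c$ and sorting the resulting terms by the grading rule $p+q=d+1$ governing $\nu_2$ and $\nu$, each term not annihilated by one of the square-zero relations is matched by a strictly-lower-degree instance of Equation \eqref{eq1}, and coassociativity of the coproducts $\nu_2$ and $\nu$ then pairs up and cancels the cross-terms. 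I expect this cocycle check to be the main obstacle, not for any conceptual reason but because of the Koszul signs introduced by the projection onto $B$ and by commuting $\alpha$, $\beta$ past $f$; keeping this bookkeeping consistent is the error-prone part of the argument.

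Granting that $c$ is a cocycle, the conclusion is formal. Its obstruction class lies in $H^1\bigl(\Der_{\tilde\PP}(\tilde\PP(\DD(A),\partial_{\alpha_1}),(B,\beta_1))\bigr)$, which by the definition recalled in Section~\ref{rappelgamma} is the first operadic $\Gamma$-cohomology group $H\Gamma^1_\PP(H_*A,H_*B)$ formed with the restricted structures $\alpha_1$ and $\beta_1$. If this group vanishes, then the cocycle $c$ is automatically a coboundary, so there is a homomorphism $f_{[d]}\colon\DD_{[d]}(A)\to B$ with $\partial(f_{[d]})=c$; this is precisely Equation \eqref{eq1} in degree $d$, which continues the induction. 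Iterating over all $d$ yields the map $f=\sum_d f_{[d]}\colon\DD(A)\to B$ satisfying Equation \eqref{eq1} in every degree, and hence, by Proposition~\ref{125}, the coalgebra morphism $\phi_f\colon(\DD(A),\partial_\alpha)\to(\DD(B),\partial_\beta)$ that answers the initial problem.
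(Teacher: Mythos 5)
Your proposal follows essentially the same route as the paper: the rearranged equation is read as $\partial(f_{[d]})=c$ in the derivation complex $\Der_{\tilde\PP}(\tilde\PP(\DD(A),\partial_{\alpha_1}),(B,\beta_1))$, with $c$ determined by the induction hypothesis, and the vanishing of $H^1$ then lets one solve for $f_{[d]}$ and hence build $\phi_f$ via Proposition~\ref{125}. You are in fact more explicit than the paper about the need to check that the obstruction $c$ is a cocycle (the paper leaves this implicit after writing ``We have proved''), and your sketched verification via the square-zero relations and coassociativity is the correct strategy for that step.
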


We now relate this cohomology group with one group of $\Gamma$-homology:

\begin{coro}\label{threal}
The obstruction to the realizability of morphisms lies in $H\Gamma_\PP^1(H_*A,H_*B)$. 
\end{coro}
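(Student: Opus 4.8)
The plan is to identify the cohomology group $H^1\Der_{\tilde\PP}(\tilde\PP(\DD(A),\partial_{\alpha_1}),(B,\beta_1))$ appearing in the preceding theorem with the operadic $\Gamma$-cohomology group $H\Gamma_\PP^1(H_*A,H_*B)$, simply by unwinding the definition recalled in Section~\ref{rappelgamma}. Recall that $H\Gamma_\PP^1(H_*A,H_*B)$ is by definition the first cohomology group of $\Der_{\tilde\PP}(\widetilde{H_*A},H_*B)$, where $\tilde\PP$ is a $\Sigma_*$-cofibrant replacement of $\PP$ and $\widetilde{H_*A}$ is a cofibrant replacement of $H_*A$ in the category of $\tilde\PP$-algebras, with derivations taken relatively to the appropriate augmentation. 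So the content of the corollary is entirely that the two complexes of derivations agree once one feeds in the correct cofibrant replacement.

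First I would recall that we have reduced, in Section~\ref{restrict}, to the situation where $A$ and $B$ carry trivial differentials and are identified with $H_*A$ and $H_*B$ respectively, equipped with their restricted $\PP$-algebra structures $\alpha_1$ and $\beta_1$. Next I would observe that $\tilde\PP = B^c(B(\PP\boxtimes\EE))$ is indeed a cofibrant, hence in particular $\Sigma_*$-cofibrant, replacement of $\PP$, so it is a legitimate choice in the definition of $\Gamma$-cohomology. Then I would invoke Proposition~\ref{rempcofib}, which states precisely that $(\tilde\PP(\DD(A),\partial_\alpha),\partial)$ is a cofibrant replacement of $A$ in the category of $\tilde\PP$-algebras; taking the version with $\alpha_1$ in place of $\alpha$ (as dictated by the theorem, where only the restricted structures $\alpha_1,\beta_1$ enter the derivation complex) gives a cofibrant replacement $\widetilde{H_*A} = (\tilde\PP(\DD(A),\partial_{\alpha_1}),\partial)$ of $H_*A$ as a $\tilde\PP$-algebra.

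Substituting this explicit cofibrant replacement into the definition of $\Gamma$-cohomology immediately yields
\begin{equation*}
H\Gamma_\PP^1(H_*A,H_*B) = H^1\Der_{\tilde\PP}\bigl(\tilde\PP(\DD(A),\partial_{\alpha_1}),(H_*B,\beta_1)\bigr),
\end{equation*}
which is exactly the group identified in the theorem (with $B$ and $H_*B$ identified, and $\beta$ replaced by $\beta_1$ since the cohomology depends only on the restricted structure, as already noted there). Combining this identification with the theorem, whose conclusion is that the vanishing of $H^1\Der_{\tilde\PP}(\dots)$ allows us to continue the inductive construction of $f_{[d]}$ and hence produce $\phi_f$, gives the corollary: the obstruction to realizability lives in $H\Gamma_\PP^1(H_*A,H_*B)$.

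The main point to be careful about — the only place where this is not purely a matter of unwinding definitions — is the compatibility of the derivation module with the restricted structures. One must check that the derivations relevant to the $\Gamma$-cohomology, which are $\tilde\PP$-derivations taken relatively to the augmentation $\widetilde{H_*A}\we H_*A$ composed with $f_0$, coincide with the derivations $\Der_{\tilde\PP}(\tilde\PP(\DD(A),\partial_{\alpha_1}),(B,\beta_1))$ appearing in the obstruction computation, where only $\alpha_1$ and $\beta_1$ intervene and not the full actions $\alpha,\beta$. This is guaranteed by the remark following the theorem and by the choice of the section $\PP\hookrightarrow B^c(B(\PP\boxtimes\EE))$ made in Section~\ref{restrict}, but it deserves an explicit sentence to confirm that the module of coefficients $(B,\beta_1)=(H_*B,\beta_1)$ is the correct $\widetilde{H_*A}$-module, i.e.\ that the $\tilde\PP$-action on $H_*B$ used as coefficients is the one induced by $f_0$ and the restricted structure $\beta_1$.
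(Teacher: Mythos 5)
Your proposal matches the paper's own proof: both identify $H^1\Der_{\tilde\PP}(\tilde\PP(\DD(A),\partial_{\alpha_1}),(B,\beta_1))$ with $H\Gamma_\PP^1(H_*A,H_*B)$ by invoking Proposition \ref{rempcofib} to recognize $\tilde\PP(\DD(A),\partial_{\alpha_1})$ as a cofibrant replacement of $(A,\alpha_1)$ and then unwinding the definition of $\Gamma$-cohomology from Section \ref{rappelgamma}. Your additional remark on the compatibility of the derivation module with the restricted structures $\alpha_1,\beta_1$ is a point the paper leaves implicit, but the argument is the same.
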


\begin{proof}
The $\tilde\PP$-algebra $\tilde\PP(\DD (A),\partial_{\alpha_1})$ is nothing but a cofibrant replacement of $(A,\alpha_1)$ (cf. Proposition \ref{rempcofib}), so the cohomology $H^*\Der_{\tilde\PP}(\tilde\PP(\DD (A),\partial_{\alpha_1}), (B,\beta_0))$ is the $\Gamma$-cohomology of the $\tilde\PP$-algebra $A$ with coefficients in $B$, for the actions $\alpha_1$ and $\beta_1$. This cohomology is actually $H\Gamma_\PP^*(H_*A,H_*B)$, cf. Section \ref{rappelgamma}.
\end{proof}

\subsection{Remarks.}\label{remarks}
\begin{itemize}
 \item The $d$-th obstruction lies in $H^1\Der_{\tilde\PP}(\tilde\PP(\DD_{[d]} (A), \partial_{\alpha_1}), (B,\beta_1))$. 
Thus the total obstruction lies in $\bigoplus_d H^1\Der_{\tilde\PP}(\tilde\PP(\DD_{[d]} (A), \partial_{\alpha_1}), (B,\beta_1))$. 

\noindent Note that $\bigoplus_d H^1\Der_{\tilde\PP}(\tilde\PP(\DD_{[d]} (A), \partial_{\alpha_1}), (B,\beta_1))$ is included in $H\Gamma_\PP^{1}(H_*A,H_*B)$ but has no reason to be equal.
 \item It is possible to work over a ring $\KK$ instead of a field, but some additional assumptions are then necessary. We need to assume that relevant dg-modules over $\KK$ are projective and that we have sections of the maps: $H_*A \to A$ and $H_*B \to B$. 
 
\end{itemize}

\vspace{1.5cm}

\section{Realization of homotopies}

In this section, we consider the problem of uniqueness of realizations in the homotopy category.
We are given 
\begin{itemize}
        \item a graded operad $\PP$ with the canonical operadic cofibrant replacement $\tilde\PP=B^c(B(\PP\boxtimes\EE))$;
	\item two algebras over $\tilde\PP$, $(A, \alpha)$ and $(B, \beta)$, 
        \item two morphisms $f^0 ,f^1 : \DD(A) \to B$ realizing the same $\PP$-algebra morphism $\psi : H_*A \to H_*B$.
\end{itemize}
The morphisms $f^0$ and $f^1$ induce morphisms  $\tilde\PP \phi_{f^0}$ and  $\tilde\PP \phi_{f^1}$ from $\tilde\PP\DD (A)$ to $\tilde\PP\DD (B)$, and thus two morphisms of $\tilde\PP$-algebras from $A$ to $B$ in the homotopy category.
The question we want to study in this section is: what is the obstruction to the equality of these morphisms in the homotopy category?
We show that the obstruction lies in a group of $\Gamma$-cohomology.

\vspace{1cm}

\subsection{Outline of the study}

We restrict our study to the case where the differentials of $A$ and $B$ are trivial. We consider the cooperad $\DD$ defined by $B(\PP \boxtimes \EE)$. We also consider the cylinder object $B \t N^*(\Delta^1)$ of $B$ in the category of $\tilde\PP$-algebras, whose action is denoted $(\beta \t \sigma) \circ \rho$, cf. Section \ref{cylindre}. For this matter, we define an explicit section $\rho :\tilde\PP \to \tilde\PP \boxtimes \EE$ in Section \ref{section}.

In Section \ref{construct2}, we want to construct a $\DD$-coalgebra map 
$\phi_f : (\DD(A), \partial_\alpha) \to (\DD(B \t N^*(\Delta^1)), \partial_{(\beta \t \sigma)\circ \rho})$
giving a homotopy between $\phi_{f^0}$ and $\phi_{f^1}$. Its restriction $f$ must fit into the following commutative diagram:

$$\xymatrix@M=3pt@H=4pt@W=3pt@R=18pt@C=34pt{
 & B\ar[d]^{i_0} \\
\DD(A)\ar[ur]^{f^0}\ar[r]^{f}\ar[dr]_{f^1}     &     B \t N^*(\Delta^1) \\
 & B\ar[u]_{i_1} \, \,  .
} $$

As in the previous section, we will construct $\phi_f$ by induction, and see the obstruction to the construction. Such a map $\phi_f$ induces a homotopy between  the morphisms $\tilde\PP \phi_{f^0}$ and $\tilde\PP \phi_{f^1}$ and thus their equality in the homotopy category.
Our study is very similar to the previous one, except we have to consider the cylinder object $B\t N^*(\Delta^1)$ instead of $B$ itself. 

\vspace{1.5cm}

\subsection{Explicitation of a section}\label{section}

We define in this section an explicit operadic section $\rho : \tilde\PP \to \tilde\PP \boxtimes \EE$.

Recall from \cite{3BM2} that the cobar-bar construction $B^c(B(-))$ can be identified with the cubical W-construction $W_\square(-)$.
Markl and Shnider \cite{3MS} have constructed a diagonal on the $W$-construction: a map $W_{\square}(\QQ) \xrightarrow{\Delta_\QQ} W_\square(\QQ) \boxtimes W_\square(\QQ)$ for any operad $\QQ$.

On the other hand, we easily observe that for any operads $\PP$ and $\QQ$, we can identify $B^c(B(\PP\boxtimes \QQ))$ and $B^c(B(\PP)) \boxtimes B^c(B(\QQ))$. 

Combining these two facts, we can consider the composite :
\begin{eqnarray*}
 B^c(B(\PP\boxtimes \EE)) & = & B^c(B(\PP)) \boxtimes B^c(B(\EE)) \\
 & \stackrel{id \boxtimes \Delta_\EE}{\to} & B^c(B(\PP)) \boxtimes B^c(B(\EE))  \boxtimes B^c(B(\EE)) \\
 & = & B^c(B(\PP\boxtimes \EE)) \boxtimes B^c(B(\EE)) \\
 & \stackrel{id \boxtimes aug}{\to} & B^c(B(\PP\boxtimes \EE)) \boxtimes \EE
\end{eqnarray*}
where $aug$ denotes the augmentation $B^c(B(\EE)) \to \EE$.

We denote this composite by $\rho : \tilde\PP \to \tilde\PP \boxtimes \EE$.

\subsection{Construction of the morphism of coalgebras}\label{construct2}

Suppose $A$ and $B$ are algebras over $\tilde\PP$. The same argument as in Section \ref{restrict} allows us to suppose their differentials are trivial. We use the same graduation as in Section \ref{construct1}.

The morphism $\phi_f$ must fit the following commutative diagram:

$$\xymatrix@M=3pt@H=18pt@W=38pt@R=28pt@C=48pt{
\DD(A)\ar[r]^{\phi_{f}} \ar[d]_{\partial_{\alpha}+\partial_\DD} & 
         \DD(B\t N^*(\Delta^1))\ar[d]_{\partial_N + \partial_{(\beta \t  \sigma)\circ \rho }+\partial_\DD}\ar[ddr]^{(\beta \t \sigma)\circ \rho + proj \circ \partial_N} & \\
\DD(A)\ar[r]^{\phi_{f}}\ar[drr]_{f^{01} \t \underline{01}^\#}  & 
         \DD(B\t N^*(\Delta^1))\ar[dr]|{proj} & \\
& & B\t \underline{01}^\# .
}$$

The triangle on the right obviously commutes. The commutativity of the triangle on the left defines $f^{01}$, the restriction of $f$ at the target in the component of $\underline{01}^\#$. The commutativity of the exterior diagram is equivalent to the commutativity of the inner square.

The commutativity of this diagram is equivalent to the equation:
\begin{equation}\label{eq2}
(f^{01}\t \underline{01}^\#) \circ (\partial_\DD + \partial_{\alpha}) = (\beta \t \sigma) \circ \rho \circ \phi_{f} +  (f^1-f^0)\t \underline{01}^\#.
\end{equation}

We want to construct the map $f^{01}$ by induction on the degree. We notice that in degree zero, $\DD_{[0]}(A)$ is reduced to $A$ and that $f^1_{[0]}-f^0_{[0]}=\psi -\psi=0$. Thus we define $f^{01}_{[0]} = 0$.

We now suppose by induction that $f^{01}$ is defined for degrees smaller than $d$ and we consider an element $\gamma (a_1, \ldots, a_n)$ where $\gamma$ lies in $\DD_{[d+1]}$. For this element, Equation \eqref{eq2} is equivalent to
$$(f^{01} \t \underline{01}^\# ) \left(\vcenter{
	\xymatrix@M=3pt@H=4pt@W=3pt@R=8pt@C=4pt{
	a_1 \ar@{-}[dr]\ar@{.}[rr] && a_n\ar@{-}[dl] \\
	&  \partial_{\DD}\gamma\ar@{-}[d] &  \\
	&&}  }  \right) 
+
\sum_{\nu_2} \sum_{k=1}^d (f^{01} \t \underline{01}^\# ) \left(\vcenter{
\xymatrix@M=3pt@H=4pt@W=3pt@R=8pt@C=4pt{
 	&a_*\ar@{.}[rr]\ar@{-}[dr] & \save+<-33pt,-11pt>*{\alpha \Biggl[} +<61pt,0pt>*{\Biggl]}\restore & a_*\ar@{-}[dl] \\
a_*\ar@{-}[drr]\ar@{.}[r]& & \gamma''_{[k]}\ar@{-}[d]  &\ar@{.}[r]& a_*\ar@{-}[dll] \\
	&& \gamma'\ar@{-}[d] && \\
	&&&}}\right)
$$ $$ =
\sum_{\stackrel{\nu}{\epsilon_* \in \{0,1,01\}}} \sum_{k=1}^{d+1} (\beta\t \sigma) \circ \rho \left(\vcenter{
\xymatrix@M=3pt@H=4pt@W=3pt@R=8pt@C=4pt{
 	\ar@{-}[dr] & \save+<-23pt,-11pt>*{f^{\epsilon_1} \Biggl[} +<49pt,0pt>*{\Biggl] \t \underline\epsilon^\#_1}\restore a_*\ar@{.}[r]\ar@{.}[l]& \ar@{-}[dl] &&&& \ar@{-}[dr] & \save+<-23pt,-11pt>*{f^{\epsilon_r} \Biggl[} +<49pt,0pt>*{\Biggl] \t \underline\epsilon^\#_r}\restore a_*\ar@{.}[r]\ar@{.}[l]& \ar@{-}[dl]\\
 & \gamma''_*\ar@{-}[drrr]  &\ar@{.}[rrrr]&&&&& \gamma''_*\ar@{-}[dlll] & \\
&	&&& \gamma'_{[k]}\ar@{-}[d] && \\
&	&&&&}}\right)
$$ $$ +
((f^1-f^0) \t \underline{01}^\# ) \left(\vcenter{
	\xymatrix@M=3pt@H=4pt@W=3pt@R=8pt@C=4pt{
	a_1 \ar@{-}[dr]\ar@{.}[rr] && a_n\ar@{-}[dl] \\
	& \gamma\ar@{-}[d] &  \\
	&&}  }  \right) 
$$
where $\gamma'_{[k]}$ and $\gamma''_{[k]}$ denote elements in $\DD_{[k]}$. 

\vspace{0.5cm}
The main difficulty in this equation (and the main difference with the study in Section \ref{construct1}) comes from the term 
$$\sum_{\nu} \sum_{k=1}^{d+1} (\beta\t \sigma) \circ \rho \left(\vcenter{
\xymatrix@M=3pt@H=4pt@W=3pt@R=8pt@C=4pt{
 	\ar@{-}[dr] & \save+<-23pt,-11pt>*{f^{\epsilon_1} \Biggl[} +<49pt,0pt>*{\Biggl] \t \underline\epsilon_1^\#}\restore a_*\ar@{.}[r]\ar@{.}[l]& \ar@{-}[dl] && \ar@{-}[dr] & \save+<-23pt,-11pt>*{f^{\epsilon_r} \Biggl[} +<49pt,0pt>*{\Biggl] \t \underline\epsilon_r^\#}\restore a_*\ar@{.}[r]\ar@{.}[l]& \ar@{-}[dl]\\
 & \gamma''_*\ar@{-}[drr]  &\ar@{.}[rr]&&& \gamma''_*\ar@{-}[dll] & \\
	&&& \gamma'_{[k]}\ar@{-}[d] && \\
	&&&&}}\right).$$

If $\gamma'$ is an element of $\DD_{[k]}, k \geq 2$, then the maps $f^{01}$ appearing in this term are applied to elements $\gamma''_{[\ell]}$ with $\ell \leq d-k$. Thus these terms are already known, according to the induction hypothesis.

If $\gamma'=p\t\pi$ is an element of $\DD_{[1]}$, we first  notice that $\rho (p \t \pi) = (p \t \pi) \t \pi$ for $p\t\pi \in \PP \boxtimes \EE \subset \tilde\PP$.
Then we can rewrite the term for $k=1$ as 
$$\beta \left(\vcenter{
\xymatrix@M=3pt@H=4pt@W=3pt@R=8pt@C=4pt{
 	\ar@{-}[dr] & \save+<-23pt,-11pt>*{f^{\epsilon_1} \Biggl[} +<40pt,0pt>*{\Biggl]}\restore a_*\ar@{.}[r]\ar@{.}[l]& \ar@{-}[dl] && \ar@{-}[dr] & \save+<-23pt,-11pt>*{f^{\epsilon_r} \Biggl[} +<40pt,0pt>*{\Biggl]}\restore a_*\ar@{.}[r]\ar@{.}[l]& \ar@{-}[dl]\\
 & \gamma''_*\ar@{-}[drr]  &\ar@{.}[rr]&&& \gamma''_*\ar@{-}[dll] & \\
	&&& p \t \pi\ar@{-}[d] && \\
	&&&&}}\right)
\t \sigma (\pi, \underline\epsilon_1^\#, \ldots, \underline\epsilon_r^\#)
$$
with $p$ in $\PP$ and $\pi$ in $\EE_0$. Exactly one of the $\underline\epsilon^\#$ has to be $\underline{01}^\#$ so that this term ends up in $B \t \underline{01}^\#$ (cf. the description of the action of $\EE_0$ on $N^*(\Delta ^1)$ in Section \ref{actionE}). Thus there is only one map $f^{01}$ involved. If this map $f^{01}$ is applied to an element  $\gamma''_{[\ell]}$ with $\ell \leq d-1$, the term is known. If this map $f^{01}$ is applied to an element $\gamma''_{[\ell]}$ with $\ell = d$, we know that all other $\gamma''$ must be in degree $0$, and thus the $f^\epsilon$ applied to these $\gamma''$ are just $\psi$.

\vspace{0.5cm}
Thus we rewrite Equation \eqref{eq2} as 
$$(f^{01} \t \underline{01}^\# ) \left(\vcenter{
	\xymatrix@M=3pt@H=4pt@W=3pt@R=8pt@C=4pt{
	a_1 \ar@{-}[dr]\ar@{.}[rr] && a_n\ar@{-}[dl] \\
	&  \partial_{\DD}\gamma\ar@{-}[d] &  \\
	&&}  }  \right) 
+
\sum_{\nu_2}  (f^{01}_{[d]} \t \underline{01}^\# ) \left(\vcenter{
\xymatrix@M=3pt@H=4pt@W=3pt@R=8pt@C=4pt{
 	&a_*\ar@{.}[rr]\ar@{-}[dr] & \save+<-33pt,-11pt>*{\alpha \Biggl[} +<61pt,0pt>*{\Biggl]}\restore & a_*\ar@{-}[dl] \\
a_*\ar@{-}[drr]\ar@{.}[r]& & \gamma''_{[1]}\ar@{-}[d]  &\ar@{.}[r]& a_*\ar@{-}[dll] \\
	&& \gamma'\ar@{-}[d] && \\
	&&&}}\right)
$$ $$
-
\sum_{\stackrel{\nu}{\epsilon_* \in \{0,1\}}}  (\beta \t \sigma) \circ \rho \left(\vcenter{
\xymatrix@M=3pt@H=4pt@W=3pt@R=8pt@C=4pt{
 	\ar@{-}[dr] & \save+<-21pt,-11pt>*{\psi \Biggl[} +<47pt,0pt>*{\Biggl] \t \underline\epsilon_1^\#}\restore a_*\ar@{.}[r]\ar@{.}[l]& \ar@{-}[dl] 
&&&& \ar@{-}[dr] & \save+<-25pt,-11pt>*{f^{01}_{[d]} \Biggl[} +<55pt,0pt>*{\Biggl] \t \underline{01}^\#}\restore a_*\ar@{.}[r]\ar@{.}[l]& \ar@{-}[dl]
&&&& \ar@{-}[dr] & \save+<-21pt,-11pt>*{\psi \Biggl[} +<47pt,0pt>*{\Biggl] \t \underline\epsilon_r^\#}\restore a_*\ar@{.}[r]\ar@{.}[l]& \ar@{-}[dl]\\
 & \gamma''_*\ar@{-}[drrrrrr]  &\ar@{.}[rrrr]&&&&& \gamma''_*\ar@{-}[d] &\ar@{.}[rrrr]&&&&& \gamma''_*\ar@{-}[dllllll] & \\
	&&&&&&& \gamma'_{[1]}\ar@{-}[d] && \\
&&&&	&&&&}}\right)
$$ $$ =
\sum_{\stackrel{\nu}{\epsilon_* \in \{0,1\}}} \sum_{\ell=0}^{d-1} (\beta \t \sigma) \circ \rho \left(\vcenter{
\xymatrix@M=3pt@H=4pt@W=3pt@R=8pt@C=4pt{
 	\ar@{-}[dr] & \save+<-23pt,-11pt>*{f^{\epsilon_1} \Biggl[} +<49pt,0pt>*{\Biggl] \t \underline\epsilon^\#_1}\restore a_*\ar@{.}[r]\ar@{.}[l]& \ar@{-}[dl] 
&&&& \ar@{-}[dr] & \save+<-25pt,-11pt>*{f^{01}_{[\ell]} \Biggl[} +<55pt,0pt>*{\Biggl] \t \underline{01}^\#}\restore a_*\ar@{.}[r]\ar@{.}[l]& \ar@{-}[dl]
&&&& \ar@{-}[dr] & \save+<-23pt,-11pt>*{f^{\epsilon_r} \Biggl[} +<49pt,0pt>*{\Biggl] \t \underline\epsilon_r^\#}\restore a_*\ar@{.}[r]\ar@{.}[l]& \ar@{-}[dl]\\
 & \gamma''_*\ar@{-}[drrrrrr]  &\ar@{.}[rrrr]&&&&& \gamma''_*\ar@{-}[d] &\ar@{.}[rrrr]&&&&& \gamma''_*\ar@{-}[dllllll] & \\
	&&&&&&& \gamma'_{[1]}\ar@{-}[d] && \\
&&&&	&&&&}}\right)
$$ $$+
\sum_{\stackrel{\nu}{\epsilon_* \in \{0,1,01\}}} \sum_{k=2}^{d+1} (\beta\t \sigma) \circ \rho \left(\vcenter{
\xymatrix@M=3pt@H=4pt@W=3pt@R=8pt@C=4pt{
 	\ar@{-}[dr] & \save+<-23pt,-11pt>*{f^{\epsilon_1} \Biggl[} +<49pt,0pt>*{\Biggl] \t \underline\epsilon^\#_1}\restore a_*\ar@{.}[r]\ar@{.}[l]& \ar@{-}[dl] &&&& \ar@{-}[dr] & \save+<-23pt,-11pt>*{f^{\epsilon_r} \Biggl[} +<49pt,0pt>*{\Biggl] \t \underline\epsilon^\#_r}\restore a_*\ar@{.}[r]\ar@{.}[l]& \ar@{-}[dl]\\
 & \gamma''_*\ar@{-}[drrr]  &\ar@{.}[rrrr]&&&&& \gamma''_*\ar@{-}[dlll] & \\
	&&&& \gamma'_{[k]}\ar@{-}[d] && \\
	&&&&&}}\right)
$$ $$-
\sum_{\nu_2} \sum_{k=2}^d  (f^{01}_{[d+1-k]} \t \underline{01}^\# ) \left(\vcenter{
\xymatrix@M=3pt@H=4pt@W=3pt@R=8pt@C=4pt{
 	&a_*\ar@{.}[rr]\ar@{-}[dr] & \save+<-33pt,-11pt>*{\alpha \Biggl[} +<61pt,0pt>*{\Biggl]}\restore & a_*\ar@{-}[dl] \\
a_*\ar@{-}[drr]\ar@{.}[r]& & \gamma''_{[k]}\ar@{-}[d]  &\ar@{.}[r]& a_*\ar@{-}[dll] \\
	&& \gamma'\ar@{-}[d] && \\
	&&&}}\right)
+
((f^1-f^0) \t \underline{01}^\# ) \left(\vcenter{
	\xymatrix@M=3pt@H=4pt@W=3pt@R=8pt@C=4pt{
	a_1 \ar@{-}[dr]\ar@{.}[rr] && a_n\ar@{-}[dl] \\
	& \gamma\ar@{-}[d] &  \\
	&&}  }  \right) .
$$

\vspace{0.5cm}
The last sum of the left hand side can be simplified. 
Actually, for a given $\gamma'_{[1]}=p \t \pi$, we have 

$$ 
(\beta \t \sigma) \circ \rho \left(\vcenter{
\xymatrix@M=3pt@H=4pt@W=3pt@R=8pt@C=4pt{
 	\ar@{-}[dr] & \save+<-21pt,-11pt>*{\psi \Biggl[} +<47pt,0pt>*{\Biggl] \t \underline\epsilon_1^\#}\restore a_*\ar@{.}[r]\ar@{.}[l]& \ar@{-}[dl] 
&&&& \ar@{-}[dr] & \save+<-25pt,-11pt>*{f^{01}_{[d]} \Biggl[} +<55pt,0pt>*{\Biggl] \t \underline{01}^\#}\restore a_*\ar@{.}[r]\ar@{.}[l]& \ar@{-}[dl]
&&&& \ar@{-}[dr] & \save+<-21pt,-11pt>*{\psi \Biggl[} +<47pt,0pt>*{\Biggl] \t \underline\epsilon_r^\#}\restore a_*\ar@{.}[r]\ar@{.}[l]& \ar@{-}[dl]\\
 & \gamma''_*\ar@{-}[drrrrrr]  &\ar@{.}[rrrr]&&&&& \gamma''_*\ar@{-}[d] &\ar@{.}[rrrr]&&&&& \gamma''_*\ar@{-}[dllllll] & \\
	&&&&&&& \gamma'_{[1]}\ar@{-}[d] && \\
&&&&	&&&&}}\right)
$$ $$
= \beta  \left(\vcenter{
\xymatrix@M=3pt@H=4pt@W=3pt@R=8pt@C=4pt{
 	\ar@{-}[dr] & \save+<-21pt,-11pt>*{\psi \Biggl[} +<38pt,0pt>*{\Biggl]}\restore a_*\ar@{.}[r]\ar@{.}[l]& \ar@{-}[dl] 
&&& \ar@{-}[dr] & \save+<-29pt,-11pt>*{f^{01}_{[d]} \Biggl[} +<51pt,0pt>*{\Biggl] }\restore a_*\ar@{.}[r]\ar@{.}[l]& \ar@{-}[dl]
&&& \ar@{-}[dr] & \save+<-21pt,-11pt>*{\psi \Biggl[} +<38pt,0pt>*{\Biggl]}\restore a_*\ar@{.}[r]\ar@{.}[l]& \ar@{-}[dl]\\
 & \gamma''_*\ar@{-}[drrrrr]  &\ar@{.}[rrr]&&&& \gamma''_*\ar@{-}[d] &\ar@{.}[rrr]&&&& \gamma''_*\ar@{-}[dlllll] & \\
	&&&&&& p \t \pi\ar@{-}[d] && \\
&&&&	&&&&}}\right)
\t \sigma (\pi,\epsilon_1^\#, \ldots, \underline{01}^\#, \ldots, \epsilon_r^\#)
$$ $$
= \beta_1  \left(\vcenter{
\xymatrix@M=3pt@H=4pt@W=3pt@R=8pt@C=4pt{
 	\ar@{-}[dr] & \save+<-21pt,-11pt>*{\psi \Biggl[} +<38pt,0pt>*{\Biggl]}\restore a_*\ar@{.}[r]\ar@{.}[l]& \ar@{-}[dl] 
&&& \ar@{-}[dr] & \save+<-29pt,-11pt>*{f^{01}_{[d]} \Biggl[} +<51pt,0pt>*{\Biggl] }\restore a_*\ar@{.}[r]\ar@{.}[l]& \ar@{-}[dl]
&&& \ar@{-}[dr] & \save+<-21pt,-11pt>*{\psi \Biggl[} +<38pt,0pt>*{\Biggl]}\restore a_*\ar@{.}[r]\ar@{.}[l]& \ar@{-}[dl]\\
 & \gamma''_*\ar@{-}[drrrrr]  &\ar@{.}[rrr]&&&& \gamma''_*\ar@{-}[d] &\ar@{.}[rrr]&&&& \gamma''_*\ar@{-}[dlllll] & \\
	&&&&&& p\ar@{-}[d] && \\
&&&&	&&&&}}\right)
\t \sigma (\pi,\epsilon_1^\#, \ldots, \underline{01}^\#, \ldots, \epsilon_r^\#).
$$ 

Only one choice of $\epsilon$'s will give a non-zero term: the one where after composition with the permutation $\pi$, the sequence is $(\underline 0^\#,\ldots, \underline 0^\#, \underline {01}^\#, \underline 1^\#, \ldots, \underline 1^\#)$, according to the action of $\EE_0$ on $N^*(\Delta^1)$.

\vspace{0.5cm}
Thus we finally get 
$$(f^{01} \t \underline{01}^\# ) \left(\vcenter{
	\xymatrix@M=3pt@H=4pt@W=3pt@R=8pt@C=4pt{
	a_1 \ar@{-}[dr]\ar@{.}[rr] && a_n\ar@{-}[dl] \\
	&  \partial_{\DD}\gamma\ar@{-}[d] &  \\
	&&}  }  \right) 
+
\sum_{\nu_2}  (f^{01}_{[d]} \t \underline{01}^\# ) \left(\vcenter{
\xymatrix@M=3pt@H=4pt@W=3pt@R=8pt@C=4pt{
 	&a_*\ar@{.}[rr]\ar@{-}[dr] & \save+<-33pt,-11pt>*{\alpha_1 \Biggl[} +<61pt,0pt>*{\Biggl]}\restore & a_*\ar@{-}[dl] \\
a_*\ar@{-}[drr]\ar@{.}[r]& & \gamma''_{[0]}\ar@{-}[d]  &\ar@{.}[r]& a_*\ar@{-}[dll] \\
	&& \gamma'\ar@{-}[d] && \\
	&&&}}\right)
$$ $$
-
\sum_{\nu} \beta_1  \left(\vcenter{
\xymatrix@M=3pt@H=4pt@W=3pt@R=8pt@C=4pt{
 	\ar@{-}[dr] & \save+<-21pt,-11pt>*{\psi \Biggl[} +<38pt,0pt>*{\Biggl]}\restore a_*\ar@{.}[r]\ar@{.}[l]& \ar@{-}[dl] 
&&& \ar@{-}[dr] & \save+<-29pt,-11pt>*{f^{01}_{[d]} \Biggl[} +<51pt,0pt>*{\Biggl] }\restore a_*\ar@{.}[r]\ar@{.}[l]& \ar@{-}[dl]
&&& \ar@{-}[dr] & \save+<-21pt,-11pt>*{\psi \Biggl[} +<38pt,0pt>*{\Biggl]}\restore a_*\ar@{.}[r]\ar@{.}[l]& \ar@{-}[dl]\\
 & \gamma''_*\ar@{-}[drrrrr]  &\ar@{.}[rrr]&&&& \gamma''_*\ar@{-}[d] &\ar@{.}[rrr]&&&& \gamma''_*\ar@{-}[dlllll] & \\
	&&&&&& \gamma'_{[0] |\PP}\ar@{-}[d] && \\
&&&&	&&&&}}\right)
\t  \underline{01}^\#
$$ $$ =
\sum_{\stackrel{\nu}{\epsilon_3 \in \{0,1\}}} \sum_{\ell=0}^{d-1} (\beta_0 \t \sigma) \circ \rho \left(\vcenter{
\xymatrix@M=3pt@H=4pt@W=3pt@R=8pt@C=4pt{
 	\ar@{-}[dr] & \save+<-23pt,-11pt>*{f^{\epsilon_1} \Biggl[} +<49pt,0pt>*{\Biggl] \t \underline\epsilon_1^\#}\restore a_*\ar@{.}[r]\ar@{.}[l]& \ar@{-}[dl] 
&&&& \ar@{-}[dr] & \save+<-25pt,-11pt>*{f^{01}_{[\ell]} \Biggl[} +<55pt,0pt>*{\Biggl] \t \underline{01}^\#}\restore a_*\ar@{.}[r]\ar@{.}[l]& \ar@{-}[dl]
&&&& \ar@{-}[dr] & \save+<-23pt,-11pt>*{f^{\epsilon_r} \Biggl[} +<49pt,0pt>*{\Biggl] \t \underline\epsilon_r^\#}\restore a_*\ar@{.}[r]\ar@{.}[l]& \ar@{-}[dl]\\
 & \gamma''_*\ar@{-}[drrrrrr]  &\ar@{.}[rrrr]&&&&& \gamma''_*\ar@{-}[d] &\ar@{.}[rrrr]&&&&& \gamma''_*\ar@{-}[dllllll] & \\
	&&&&&&& \gamma'_{[1]}\ar@{-}[d] && \\
&&&&	&&&&}}\right)
$$ $$+
\sum_{\stackrel{\nu}{\epsilon_* \in \{0,1,01\}}} \sum_{k=1}^{d+1} (\beta\t \sigma) \circ \rho \left(\vcenter{
\xymatrix@M=3pt@H=4pt@W=3pt@R=8pt@C=4pt{
 	\ar@{-}[dr] & \save+<-23pt,-11pt>*{f^{\epsilon_1} \Biggl[} +<49pt,0pt>*{\Biggl] \t \underline\epsilon_1^\#}\restore a_*\ar@{.}[r]\ar@{.}[l]& \ar@{-}[dl] &&&& \ar@{-}[dr] & \save+<-23pt,-11pt>*{f^{\epsilon_r} \Biggl[} +<49pt,0pt>*{\Biggl] \t \underline\epsilon_r^\#}\restore a_*\ar@{.}[r]\ar@{.}[l]& \ar@{-}[dl]\\
 & \gamma''_*\ar@{-}[drrr]  &\ar@{.}[rrrr]&&&&& \gamma''_*\ar@{-}[dlll] & \\
	&&&& \gamma'_{[k]}\ar@{-}[d] && \\
	&&&&}}\right)
$$ $$-
\sum_{\nu_2} \sum_{k=2}^d  (f^{01}_{[d+1-k]} \t \underline{01}^\# ) \left(\vcenter{
\xymatrix@M=3pt@H=4pt@W=3pt@R=8pt@C=4pt{
 	&a_*\ar@{.}[rr]\ar@{-}[dr] & \save+<-33pt,-11pt>*{\alpha \Biggl[} +<61pt,0pt>*{\Biggl]}\restore & a_*\ar@{-}[dl] \\
a_*\ar@{-}[drr]\ar@{.}[r]& & \gamma''_{[k]}\ar@{-}[d]  &\ar@{.}[r]& a_*\ar@{-}[dll] \\
	&& \gamma'\ar@{-}[d] && \\
	&&&}}\right)
+
((f^1-f^0) \t \underline{01}^\# ) \left(\vcenter{
	\xymatrix@M=3pt@H=4pt@W=3pt@R=8pt@C=4pt{
	a_1 \ar@{-}[dr]\ar@{.}[rr] && a_n\ar@{-}[dl] \\
	& \gamma\ar@{-}[d] &  \\
	&&}  }  \right) 
$$
where $\gamma'_{[1] |\PP}$ denotes the component in $\PP$ of $\gamma'_{[1]} \in \PP\boxtimes\EE$.

All the terms in the right hand side are already known. 
The left hand side can be identified with $\partial(f^{01}_{[d]}(\gamma)\t \underline{01}^\#) $ where $\partial$ is the differential in $\Der_{\tilde\PP}(\tilde\PP(\DD (A), \partial_{\alpha_1}), (B \t \underline{01}^\#,\beta_1))$.
Note that these derivations take into account only the restricted structures $\alpha_1$ and $\beta_1$, and not the full structures $\alpha$ and $\beta$.

\vspace{0.5cm}
We have proved
\begin{theo}
If the cohomology group $H^1\Der_{\tilde\PP}(\tilde\PP(\DD (A), \partial_{\alpha_1}), (B \t \underline{01}^\#,\beta_1))$ is equal to $0$, we can construct a map $f^{01}_{[d]}$ (i.e. continue our induction), and hence a map $\phi_f$ answering the initial problem. 
\end{theo}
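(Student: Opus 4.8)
The plan is to run the same induction on the degree $d$ as in Section~\ref{construct1}, now with the cylinder $B\t N^*(\Delta^1)$ playing the role of the target in place of $B$, and to read off the obstruction directly from the reformulation of Equation~\eqref{eq2} obtained above. The base case is the choice $f^{01}_{[0]}=0$ already made. Assume inductively that $f^{01}_{[\ell]}$ has been constructed for all $\ell<d$ so that Equation~\eqref{eq2} holds in those degrees. The last displayed reformulation shows that, for a generator $\gamma(a_1,\ldots,a_n)$ with $\gamma\in\DD_{[d+1]}$, its right-hand side $\omega(\gamma)$ is assembled only from $\psi$, the actions $\alpha,\beta$, the section $\rho$, the $\EE_0$-action $\sigma$ described in Section~\ref{actionE}, and the already-built maps $f^{01}_{[\ell]}$ with $\ell<d$; hence $\omega$ is a well-defined $1$-cochain of $\Der_{\tilde\PP}(\tilde\PP(\DD(A),\partial_{\alpha_1}),(B\t\underline{01}^\#,\beta_1))$. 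Since the left-hand side equals $\partial\bigl(f^{01}_{[d]}\t\underline{01}^\#\bigr)$, building $f^{01}_{[d]}$ amounts to solving $\partial(f^{01}_{[d]}\t\underline{01}^\#)=\omega$ in this derivation complex.

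The hard part will be to verify that $\omega$ is a cocycle, $\partial\omega=0$, since this is exactly what makes the class $[\omega]\in H^1\Der_{\tilde\PP}(\tilde\PP(\DD(A),\partial_{\alpha_1}),(B\t\underline{01}^\#,\beta_1))$ the genuine obstruction: when $H^1$ vanishes one has $\ker\partial=\operatorname{im}\partial$, so solvability of $\partial x=\omega$ is equivalent to $\partial\omega=0$. I would prove the cocycle identity by applying the source differential $\partial_\alpha+\partial_\DD$ once more and exploiting that it squares to zero, together with the matching identity for $\partial_N+\partial_{(\beta\t\sigma)\circ\rho}+\partial_\DD$ on the cylinder side. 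The cross-terms that arise cancel precisely because $f^0$ and $f^1$ are genuine coalgebra maps and because the lower pieces $f^{01}_{[\ell]}$ with $\ell<d$ already satisfy Equation~\eqref{eq2}; the compatibility of $\rho$ with $\sigma$ is what keeps the $\underline{01}^\#$-component coherent throughout. The main care needed here is bookkeeping of the Koszul signs coming from the coderivation formulas and from the degree-$1$ generator $\underline{01}^\#$.

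Granting the cocycle condition, the hypothesis $H^1\Der_{\tilde\PP}(\tilde\PP(\DD(A),\partial_{\alpha_1}),(B\t\underline{01}^\#,\beta_1))=0$ forces $\omega$ to be a coboundary, say $\omega=\partial\eta$; defining $f^{01}_{[d]}\t\underline{01}^\#:=\eta$ then solves Equation~\eqref{eq2} in degree $d$ and continues the induction. Collecting the $f^{01}_{[d]}$ over all $d$ completes the $\underline{01}^\#$-component of $f$, which together with $f^0\t\underline 0^\#$ and $f^1\t\underline 1^\#$ assembles into $f\colon\DD(A)\to B\t N^*(\Delta^1)$, hence by Proposition~\ref{125} into the quasi-cofree coalgebra morphism $\phi_f\colon(\DD(A),\partial_\alpha)\to(\DD(B\t N^*(\Delta^1)),\partial_{(\beta\t\sigma)\circ\rho})$ whose two ends recover $\phi_{f^0}$ and $\phi_{f^1}$. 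Applying the functor $R_{\tilde\PP}$ of Proposition~\ref{rempcofib} to $\phi_f$ finally yields a $\tilde\PP$-algebra homotopy between $\tilde\PP\phi_{f^0}$ and $\tilde\PP\phi_{f^1}$, which is the map answering the initial problem.
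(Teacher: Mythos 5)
Your proposal follows essentially the same route as the paper: the same induction on the degree $d$, the same identification of the left-hand side of the rewritten form of Equation~\eqref{eq2} with $\partial\bigl(f^{01}_{[d]}(\gamma)\t\underline{01}^\#\bigr)$ in the complex $\Der_{\tilde\PP}(\tilde\PP(\DD(A),\partial_{\alpha_1}),(B\t\underline{01}^\#,\beta_1))$, and the same use of the vanishing of $H^1$ to solve for $f^{01}_{[d]}$ and then pass to $\phi_f$ and $R_{\tilde\PP}(\phi_f)$. The only difference is that you explicitly flag the cocycle condition on the obstruction cochain as requiring verification, a point the paper leaves implicit.
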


\vspace{0.5cm}

We now relate this cohomology group with one group of $\Gamma$-cohomology:

\begin{coro}\label{thunicite}
The obstruction to the existence of a homotopy of two realizations of a morphism lies in $H\Gamma_\PP^0(H_*A,H_*B)$. 
\end{coro}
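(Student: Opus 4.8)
The plan is to follow exactly the template of Corollary~\ref{threal}, the only new feature being the coefficient module. First I would recall from Proposition~\ref{rempcofib} that $\tilde\PP(\DD(A),\partial_{\alpha_1})$ is a cofibrant replacement of the $\tilde\PP$-algebra $(A,\alpha_1)$, where $A=H_*A$ carries the trivial differential. By the description of $\Gamma$-cohomology recalled in Section~\ref{rappelgamma}, for any coefficient $\tilde\PP$-module $M$ the graded group $H^*\Der_{\tilde\PP}(\tilde\PP(\DD(A),\partial_{\alpha_1}),M)$ computes the operadic $\Gamma$-cohomology of $H_*A$ with coefficients in $M$; in particular, taking $M=(B,\beta_1)$ one recovers $H\Gamma_\PP^*(H_*A,H_*B)$, exactly as in the realization case.

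The point at which the homotopy problem departs from the realization problem is that the coefficient module produced by the preceding theorem is not $(B,\beta_1)$ but $(B\t\underline{01}^\#,\beta_1)$. Here I would use that $\underline{01}^\#$, the dual of the nondegenerate $1$-simplex, spans a one-dimensional submodule of $N^*(\Delta^1)$ concentrated in degree $1$. Consequently $B\t\underline{01}^\#$ is, as a graded module, nothing but $B$ shifted up by one degree. I would then check that its $\tilde\PP$-module structure is precisely the shift of $(B,\beta_1)$: this is where the explicit degree-$0$ action of $\EE$ on $N^*(\Delta^1)$ from Section~\ref{actionE} enters, via the identity $\pi\cdot(\underline 0^\#,\ldots,\underline{01}^\#,\ldots,\underline 1^\#)=\underline{01}^\#$, which shows that the $\EE$-factor acts transparently on the $\underline{01}^\#$ component and that the induced $\tilde\PP$-action reduces to $\beta_1$ on the $B$-factor. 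Thus $(B\t\underline{01}^\#,\beta_1)\cong(B,\beta_1)[1]$ as $\tilde\PP$-modules.

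Finally I would exploit the compatibility of derivations with the shift. Since $\Der_{\tilde\PP}(\tilde A,M[1])\cong\Der_{\tilde\PP}(\tilde A,M)[1]$ as cochain complexes, a one-fold shift of the coefficients lowers the relevant cohomological index by one, so that
\[ H^1\Der_{\tilde\PP}\bigl(\tilde\PP(\DD(A),\partial_{\alpha_1}),(B\t\underline{01}^\#,\beta_1)\bigr)\;\cong\;H^0\Der_{\tilde\PP}\bigl(\tilde\PP(\DD(A),\partial_{\alpha_1}),(B,\beta_1)\bigr)\;=\;H\Gamma_\PP^0(H_*A,H_*B). \]
Combined with the preceding theorem, this yields the claim; it also matches the general principle that a homotopy sits one degree below the maps it connects, which is why the $H^1$ obstruction governing realizations becomes an $H^0$ obstruction for their comparison. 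The step I expect to be most delicate is not the degree bookkeeping but the verification that the module structure on $B\t\underline{01}^\#$ really is the unshifted $\beta_1$-structure: one must confirm that the section $\rho$ and the $\EE$-action $\sigma$ contribute nothing beyond the degree shift on the $\underline{01}^\#$-component, which is exactly the content of the simplifications carried out in the inductive computation preceding the theorem.
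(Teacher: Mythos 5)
Your proposal is correct and follows essentially the same route as the paper, whose own proof simply observes that the argument of Corollary~\ref{threal} applies verbatim except that the coefficient module $B\t\underline{01}^\#$ introduces a one-degree shift, turning $H\Gamma_\PP^1$ into $H\Gamma_\PP^0$. Your additional verification that the $\tilde\PP$-module structure on $B\t\underline{01}^\#$ is the shift of $(B,\beta_1)$ is exactly the content of the simplifications carried out in the inductive computation preceding the theorem, so nothing is missing.
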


\begin{proof}
The proof is almost the same as the proof of Theorem \ref{threal}. The only difference is that working with $B \t \underline{01}^\#$ instead of $B$ creates a shift in the degree of the cohomology group.
\end{proof}

Remarks similar to the ones in Section \ref{remarks} for the realization of morphisms can also be stated for the uniqueness of the realizations.

\vspace{1cm}

\section*{Acknowledgements}
I would like to thank David Chataur and Benoit Fresse for many useful discussions on the matter of this article. I am also grateful to Muriel Livernet and Birgit Richter for their careful reading and their remarks.

\end{document}